\newtheorem{theorem}{Theorem}[section]
\newtheorem*{theorem A}{Theorem A}
\newtheorem*{theorem B}{N\"olker's Theorem}
\newtheorem{lemma}{Lemma}[section]
\newtheorem{proposition}{Proposition}[section]
\newtheorem{corollary}{Corollary}[section]
\theoremstyle{remark}
\newtheorem{remark}{Remark}[section]
\theoremstyle{remark}
\theoremstyle{definition}
\newtheorem{definition}{Definition}[section]
\newtheorem{example}{Example}[section]
\def\w{\widetilde}
\def\wtilde{\widetilde}
\def\St{\mathsf{St}}
\def\Cost{\mathsf{Cost}}
\def\Ob{\operatorname{Ob}}
\def\GpGd{\mathsf{GrpGpd}}
\def\XMod{\mathsf{XMod}}
\def\Aut{\operatorname{Aut}}
\def\inc{\operatorname{inc}}
\def\Ker{\operatorname{Ker}}
\def\Cok{\operatorname{Coker}}
\def\Im{\operatorname{Im}}
\def\NSGGd/G{\mathsf{NSGGd/G}}
\def\NSCM/(A,B,\alpha){\mathsf{NSCM/(A,B,\alpha)}}
\def\SpPXM/(A,B,\alpha){\mathsf{SpPXM/(A,B,\alpha)}}
\def\LXM/(A,B,\alpha){\mathsf{LXMod/(A,B,\alpha)}}
\def\LXM{\mathsf{LXMod}}
\def\CXM/(A,B,\alpha){\mathsf{CovXMod/(A,B,\alpha)}}
\def\GGdA(G){\mathsf{GpGpdAct(G)}}
\def\GGdC/G{\mathsf{GpGpdCov/G}}
\def\GGdCov/X{\mathsf{GpGpdCov/\pi X}}
\def\GdC/G{\mathsf{GpdCov/G}}
\def\TGrC/X{\mathsf{TGrCov/X}}
\def\GdA(G){\mathsf{GpdAct(G)}}
\def\St{\mathsf{St}}
\def\Act(G){\mathsf{GpdAct(G)}}
\def\Cov/G{\mathsf{GpdCov/G}}
\def\C{\mathsf{C}}
\def\epsilon{\varepsilon}
\begin{document}
\title{Group-groupoid actions and liftings of crossed modules}

\author[a]{Osman Mucuk\thanks{E-mail : mucuk@erciyes.edu.tr}}
\author[b]{Tunçar Şahan\thanks{E-mail : tuncarsahan@aksaray.edu.tr}}
\affil[a]{\small{Department of Mathematics, Erciyes University, Kayseri, TURKEY}}
\affil[b]{\small{Department of Mathematics, Aksaray University, Aksaray, TURKEY}}

\date{}

\maketitle
\begin{abstract}
The aim of this paper is  to define the notion of lifting of a crossed module via a group morphism and give some properties of this type of  the lifting.  Further we obtain a criterion for a crossed module to have a lifting of crossed module. We also prove that the liftings of   a certain crossed module constitute a category; and  that this category is equivalent   to the category of covers of that crossed module and hence  to the category of group-groupoid actions of the corresponding groupoid to that crossed module.
\end{abstract}

\noindent{\bf Key Words:} Crossed module, covering, lifting, action groupoid
\\ {\bf Classification:} 20L05,  57M10,  22AXX, 22A22, 18D35
	
	\section{Introduction}
	The theory of covering groupoids has an important role in the applications of groupoids (cf.  \cite{Br1} and \cite{Hi}). In this theory it is well known that for a groupoid $G$, the category $\Act(G)$ of  groupoid actions  of $G$ on  sets, these are also called operations or $G$-sets, are equivalent to  the category $\Cov/G$ of  covering groupoids of $G$ (see \cite[Theorem 2]{Br-Da-Ha} for the topological version of this equivalence).
	
	On the one hand in  \cite[Proposition 3.1]{Br-Mu1} it was proved that if  $G$ is a group-groupoid, which is an internal groupoid in the category of groups and widely used in literature under the names \emph{2-group} (see for example \cite{baez-lauda-2-groups}), {\em $\mathcal{G}$-groupoid} or \emph{group object} \cite{BS1} in the category of groupoids, then   the category $\GGdC/G$ of group-groupoid coverings of $G$ is equivalent to the category $\GGdA(G)$ of group-groupoid actions of $G$ on groups. In \cite[Theorem 4.2]{Ak-Al-Mu-Sa} this result has been recently generalized to the case where $G$ is an internal groupoid for an algebraic category $\C$ which includes groups, rings without identity, $R$-modules,  Lie algebras, Jordan algebras, and many others; acting on a group with operations in the sense of Orzech \cite{Orz}.

	On the other hand in \cite[Theorem 1]{BS1} it was proved that the categories of crossed modules and group-groupoids, under the name of $\mathcal{G}$-groupoids, are equivalent (see also  \cite{Loday82} for an alternative equivalence in terms of an algebraic object called   {\em cat$^n$-groups}).  By applying this equivalence of the categories,  normal and quotient objects in the category of group-groupoids  have been recently obtained in \cite{Mu-Sa-Al}. In \cite[Section 3]{Por} it was proved that a similar result to the one in  \cite[Theorem 1]{BS1}  can be generalized  for a certain algebraic category introduced by Orzech \cite{Orz}, and adapted and called  category of groups with operations.  The study of internal category theory was continued in the works of Datuashvili \cite{Kanex} and \cite{Wh}. Moreover, she developed cohomology theory of internal categories, equivalently, crossed modules, in categories of groups with operations \cite{Dat} and \cite{Coh}. The  equivalences of the categories  in \cite[Theorem 1]{BS1} and  \cite[Section 3]{Por} enable  us to generalize some results on group-groupoids  to the more general internal groupoids for a certain  algebraic category $\C$ (see for example \cite{Ak-Al-Mu-Sa}, \cite{Mu-Be-Tu-Na},  \cite{Mu-Tu}  and \cite{Mu-Ak}).

	In this paper  we use the equivalence of group-groupoids and crossed modules proved in \cite[Theorem 1]{BS1} to determine  a notion in crossed modules,  called lifting of a crossed module, corresponding to a group-groupoid action, investigate some properties of lifting of crossed modules and give a criteria for a crossed module to have a lifting. Finally we prove that for a group-groupoid $G$, the group-groupoid actions of $G$ on groups and liftings of the crossed module corresponding to $G$ are categorically equivalent.	
	
	\section{Preliminaries}

	Let  $G$  be a groupoid.  We write  $\Ob(G)$  for the set of
	objects  of    $G $ and write $G$ for the set of morphisms. We also identify  $\Ob(G)$  with the set of
	identities of  $G $ and so an  element  of  $\Ob(G)$  may be written as
	$x$  or  $1_x$  as convenient.  We  write   $d_0, d_1 \colon
	G\rightarrow \Ob(G)$  for the source and target maps, and, as usual,
	write $G(x,y)$ for $d_0^{-1}(x)\cap d_1 ^{-1}(y)$, for $x,y\in \Ob(G)$.
	The composition  $h\circ g$  of two elements of  $G$  is defined if
	and only if  $d_0(h) =d_1(g)$, and so the   map $(h,g)\mapsto h\circ g$
	is defined on the pullback  $G {_{d_0}\times_{d_1}} G$
	of $d_0$  and $d_1 $.  The \emph{inverse} of $g\in G(x,y)$ is denoted by
	$g^{-1}\in G(y,x)$.
	
	If  $x\in \Ob(G) $, we write $\St_Gx$  for $d_0^{-1}(x) $  and  call
	the \emph{star} of $G$ at $x$. Similarly  we write $\Cost_Gx$ for
	$d_1^{-1}(x)$ and call \emph{costar} of $G$ at $x$. The set of all
	morphisms from $x$ to $x$ is a group, called \emph{object group}
	at $x$, and denoted by $G(x)$.
	
	A groupoid $G$ is \emph{transitive (resp. simply transitive, 1-transitive and totally intransitive)} if
	$G(x,y)\neq\emptyset$ (resp. $G(x,y)$ has no more than one element, $G(x,y)$ has exactly one element and
	$G(x,y)=\emptyset$) for all $x,y\in \Ob(G)$ such that $x\neq y$.
	
	A totally intransitive groupoid is determined entirely by the family
	$\{G(x)\mid  x\in \Ob(G)\}$ of  groups. This totally intransitive
	groupoid is sometimes called \emph{totally disconnected} or
	\emph{bundle of groups} \cite[pp.218]{Br1}.

	Let $p\colon\widetilde G\rightarrow G$ be a morphism of groupoids. Then $p$ is
	called a \emph{covering morphism} and $\widetilde{G}$  a \emph{covering groupoid} of $G$ if for
	each $\widetilde x\in \Ob(\widetilde G)$ the restriction  $\St_{\widetilde{G}}{\widetilde{x}} \rightarrow \St_{G}{p(\widetilde x)}$  is  bijective.
	
	Assume that $p\colon \wtilde{G}\rightarrow G$ is a covering morphism. Then  we have a lifting function $S_{p}\colon G_{d_{0}}\times_{\Ob(p)}\Ob(\wtilde{G})\rightarrow \wtilde{G}$ assigning to the pair $(a,x)$ in the pullback $G_{d_{0}}\times_{\Ob(p)}\Ob(\wtilde{G})$ the unique element $b$ of $\St_{\widetilde{G}}{x}$ such that $p(b)=a$. Clearly $S_{p}$ is inverse to $(p,d_{0})\colon  \widetilde{G}\rightarrow G_{s}\times_{\Ob(p)}\Ob(\widetilde{G})$. So it is stated that $p\colon  \widetilde{G}\rightarrow G$ is a covering morphism if and only if $(p,s)$ is a  bijection.

	A covering morphism $p\colon \widetilde{G}\rightarrow
	G$ is called \emph{transitive } if both $\widetilde{G}$ and $G$ are
	transitive.  A transitive covering morphism $p\colon\widetilde G\rightarrow G$
	is called \emph{universal} if $\widetilde G$ covers every cover of $G$, i.e.,  for every covering morphism $q\colon \widetilde{H}\rightarrow G$ there is a unique
	morphism of groupoids $\widetilde{p}\colon \widetilde G\rightarrow \widetilde{H}$ such that $q\widetilde{p}=p$
	(and hence $\widetilde{p}$ is also a covering morphism), this is equivalent to that for
	$\widetilde{x}, \widetilde{y}\in \Ob({\widetilde G})$ the set $\widetilde{G}(\widetilde x, \widetilde y)$
	has not more than one element.
	
	Recall that an action of a groupoid $G$ on a set $S$ via a function $\omega\colon S\rightarrow \Ob(G)$ is a function ${G}_{d_{0}}\times_\omega S\rightarrow S, (g,s)\mapsto g\bullet s$ satisfying the usual rules for an action: $\omega(g\bullet s)=d_1(g)$, $1_{\omega(s)}\bullet s=s$ and $(h\circ g)\bullet s=h\bullet (g\bullet s)$ whenever $h\circ g$ and $g\bullet s$ are defined. A morphism $f\colon (S,\omega)\rightarrow (S',\omega')$ of such actions is a function $f\colon S\rightarrow S'$ such that $w'f=w$ and $f(g\bullet s)=g\bullet f(s)$ whenever $g\bullet s$ is defined. This gives a category $\Act(G)$ of actions of $G$ on sets. For such an action the action groupoid $G\ltimes S$ is defined to have object set $S$, morphisms the pairs $(g,s)$ such that $d_0(g)=\omega(s)$, source and target maps $d_0(g,s)=s$, $d_1(g,s)=g\bullet s$, and the composition
	\[(g',s')\circ (g,s)=(g\circ g',s)\]
	whenever $s'=g\bullet s$. The projection $q\colon G\ltimes S\rightarrow G, (g,s)\mapsto s$ is a covering morphism of groupoids and the functor assigning this covering morphism to an action gives an equivalence of the categories $\Act(G)$ and $\Cov/G$.

	\section{Group-groupoids and crossed modules}
	
	A {\em group-groupoid} is a groupoid $G$ with  morphisms of groupoids $G\times G\rightarrow G$, $(g,h)\mapsto g+h$ and $G\rightarrow G,g\rightarrow -g$ yielding a group structure internal to the category of groupoids. Since the addition map is a morphism of groupoids, we have an interchange rule that $(b\circ a)+(d\circ c)=(b+d)\circ (a+c)$ for all $a,b,c,d\in G$  such that $b\circ a$ and $d\circ c$ are defined. If the identity of $\Ob(G)$ is $e$, then the identity of the group structure on the morphisms set  is $1_e$.

	Let $G$ be a group-groupoid. An action of the group-groupoid $G$ on a group $X$ via $\omega$ consists of a morphism $\omega\colon X\rightarrow \Ob(G)$ from the group $X$ to the underlying group of $\Ob(G)$ and an action of the groupoid $G$ on the underlying set $X$  via $\omega$ such that the following interchange law holds:
	\begin{equation}
		(g\bullet x)+(g'\bullet x')=(g+ g')\bullet(x+x') \label{interchangeaction}
	\end{equation}
	whenever both sides are defined. A morphism $f\colon (X,\omega)\rightarrow (X',\omega')$ of such actions is a morphism   $f\colon X\rightarrow X'$ of groups and of the underlying operations of $G$.  This gives a category $\GGdA(G)$ of actions of $G$ on groups. For an action of $G$ on the group $X$ via $\omega$, the action groupoid $G\ltimes X$ has a group structure defined by \[(g,x)+(g',x')=(g+g',x+x')\]
	and with this operation $G\ltimes X$ becomes a group-groupoid and the projection $p\colon G\ltimes X\rightarrow G$ is an object of the category $\GGdC/G$. By means of this construction the following equivalence of the categories was given in \cite[Proposition 3.1]{Br-Mu1}.
	
	\begin{proposition} The categories $\GGdC/G$ and $\GGdA(G)$ are equivalent.\end{proposition}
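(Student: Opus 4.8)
The plan is to exhibit the equivalence by constructing an explicit functor $\Phi\colon \GGdA(G)\to\GGdC/G$ out of the action-groupoid construction already recalled in the text, together with a quasi-inverse $\Psi$ recovering a group-groupoid action from a covering. First I would verify that $\Phi$ is well defined on objects: given an action of $G$ on a group $X$ via $\omega$, the text has already equipped $G\ltimes X$ with the group operation $(g,x)+(g',x')=(g+g',x+x')$, so I would check that the groupoid structure maps ($d_0,d_1$, composition, inverses, identities) are group homomorphisms — this is where the interchange law \eqref{interchangeaction} is used — and that the projection $p\colon G\ltimes X\to G$, $(g,x)\mapsto g$, is simultaneously a group homomorphism and a covering morphism of groupoids. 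The covering property is immediate since for fixed $\widetilde x=x\in X$ the star of $G\ltimes X$ at $x$ consists of pairs $(g,x)$ with $d_0(g)=\omega(x)$, and $(g,x)\mapsto g$ identifies this bijectively with $\St_G(\omega(x))$. On morphisms, a morphism $f\colon(X,\omega)\to(X',\omega')$ of actions induces $G\ltimes f\colon G\ltimes X\to G\ltimes X'$, $(g,x)\mapsto(g,f(x))$, and one checks this is a morphism of group-groupoids over $G$; functoriality of $\Phi$ is then formal.

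Next I would construct $\Psi\colon \GGdC/G\to\GGdA(G)$. Given an object $p\colon\widetilde G\to G$ of $\GGdC/G$, set $X=\Ker(\Ob(p)\colon\Ob(\widetilde G)\to\Ob(G))$ when $G$ has a single object, or more generally take $X=\Ob(\widetilde G)$ with $\omega=\Ob(p)$; since $p$ is a morphism of group-groupoids, $\Ob(\widetilde G)$ is a group and $\omega$ a group homomorphism. Using the lifting function $S_p$ recalled in the Preliminaries, define the groupoid action of $G$ on $X$ by $g\bullet x = d_1\big(S_p(g,x)\big)$ for $(g,x)$ in the appropriate pullback; the usual groupoid-action axioms follow from uniqueness of lifts exactly as in the classical equivalence $\Act(G)\simeq\Cov/G$. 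The one genuinely new point is the interchange law \eqref{interchangeaction}: I would prove $S_p(g+g',x+x')=S_p(g,x)+S_p(g',x')$ by noting both sides lie in $\St_{\widetilde G}(x+x')$ (using that $+$ on $\widetilde G$ is a groupoid morphism, so it respects $d_0$) and both project to $g+g'$ under $p$ (using that $p$ is a group homomorphism), hence they coincide by the defining uniqueness property of $S_p$; applying $d_1$ and again using that $d_1$ is a homomorphism on $\widetilde G$ gives \eqref{interchangeaction}. Morphisms of covers restrict to morphisms of the object groups compatible with the actions, giving $\Psi$ on morphisms.

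Finally I would assemble the equivalence. The composite $\Psi\Phi$ sends $(X,\omega)$ to the action on $\Ob(G\ltimes X)=X$, and one checks directly from the definitions of the action groupoid and of $S_p$ that this recovers the original action on the nose, so $\Psi\Phi\cong\mathrm{id}$ (in fact equality). For $\Phi\Psi$, given $p\colon\widetilde G\to G$ one builds the natural isomorphism $\widetilde G\to G\ltimes\Ob(\widetilde G)$ by $\widetilde g\mapsto\big(p(\widetilde g),d_0(\widetilde g)\big)$; its inverse is $(g,x)\mapsto S_p(g,x)$, it is a morphism of group-groupoids over $G$ because $p$ and the structure maps are homomorphisms, and naturality in $p$ is routine. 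Hence $\Phi$ and $\Psi$ are mutually quasi-inverse and the two categories are equivalent. The main obstacle, and the only step requiring real care rather than bookkeeping, is checking the compatibility of the group structure with the groupoid structure in both directions — i.e.\ that \eqref{interchangeaction} is exactly what is needed to make $\Phi$ land in group-groupoids and that it is exactly what $\Psi$ produces — together with verifying that the comparison functor $\widetilde G\to G\ltimes\Ob(\widetilde G)$ respects addition; everything else is the classical covering-groupoid argument applied internally to the category of groups.
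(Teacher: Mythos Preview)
Your plan is correct. The paper itself does not prove this proposition: it merely records, in the paragraph preceding the statement, that the action groupoid $G\ltimes X$ with the componentwise addition is a group-groupoid and that the projection is a covering, and then cites \cite[Proposition 3.1]{Br-Mu1} for the equivalence. So your proposal is not being compared against a proof in the paper but against a citation.

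That said, your outline is exactly the argument one would expect from the cited reference, and it is sound. The forward functor $\Phi$ agrees with the construction the paper sketches; your verification that the interchange law \eqref{interchangeaction} forces $d_0,d_1$, composition and inversion on $G\ltimes X$ to be group homomorphisms is the right check. For $\Psi$, taking $X=\Ob(\widetilde G)$ with $\omega=\Ob(p)$ and defining $g\bullet x=d_1(S_p(g,x))$ is correct, and your proof of \eqref{interchangeaction} via uniqueness of lifts---both $S_p(g+g',x+x')$ and $S_p(g,x)+S_p(g',x')$ have source $x+x'$ and $p$-image $g+g'$---is the clean way to do it. The comparison isomorphism $\widetilde g\mapsto(p(\widetilde g),d_0(\widetilde g))$ is precisely the bijection $(p,d_0)$ mentioned in the Preliminaries, upgraded to a group-groupoid isomorphism. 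One small remark: the aside ``set $X=\Ker(\Ob(p))$ when $G$ has a single object'' is a distraction, since in that case $\Ob(G)$ is the trivial group and the kernel is all of $\Ob(\widetilde G)$ anyway; you can simply drop that clause and use $X=\Ob(\widetilde G)$ uniformly.
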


	We recall that as defined by Whitehead in \cite{Wth1,Wth2} a crossed module of groups consists of two groups $A$ and $B$,
	an action of $B$ on $A$ denoted by $b\cdot a$ for $a\in A$ and $b\in B$;
	and a morphism  $\alpha\colon A\rightarrow B$ of groups satisfying the
	following conditions for all $a,a_1\in A$ and $b\in B$
	\begin{enumerate}[label=\textbf{CM\arabic{*}}, leftmargin=2cm]
		\item\label{CM1} $\alpha(b\cdot a)=b+\alpha(a)-b$,
		\item\label{CM2} $\alpha(a)\cdot a_1=a+a_1-a$.
	\end{enumerate}
	We will denote such a crossed module by $(A,B,\alpha)$.
	
	Here are some examples of well known crossed modules:
	\begin{enumerate}[label=(\roman{*}), leftmargin=2cm]
		\item The inclusion map $N\hookrightarrow G$ of a normal subgroup
		is a crossed module with the conjugation action of $G$ on $N$.
		
		\item If $M$ is a $G$-module, then the zero
		map $0\colon M\rightarrow G$ has the structure of crossed module.
		
		\item The inner automorphism map
		$G\rightarrow Aut(G)$ is a crossed module.
		
		\item If $X$ is a topological group, then the fundamental group $\pi X$ is a group-groupoid, the star  $\St_{\pi X}0$ at the identity $0\in X$ becomes a group and the final point map $d_1\colon\St_{\pi X}0 \rightarrow X$ becomes a crossed module.
		
		\item As a motivating geometric example of crossed module due to Whitehead \cite{Wth1,Wth2} if $X$ is  topological space and $A\subseteq X$ with $x\in A$, then  there is a natural action of $\pi_1(A,x)$ on second relative homotopy group $\pi_2(X,A,x)$ and with this action the boundary map
		\[\partial\colon\pi_2(X,A,x)\rightarrow \pi_1(A,x)\] becomes a crossed module. This crossed module is called {\em fundamental crossed module}
		and denoted by $\Pi(X,A,x)$ (see for example \cite{BHS} for more details).
		
	\end{enumerate}

	The following are some standard properties of crossed modules.
	
	\begin{proposition}
		Let $(A,B,\alpha)$ be a crossed module. Then
		\begin{enumerate}[label=(\roman{*}), leftmargin=1cm]
			\item  $\alpha(A)$ is a normal subgroup of $B$.
			\item $\Ker \alpha$ is central in $A$, i.e. $\Ker \alpha$ is a subset of $Z(A)$, the center of $A$.
			\item $\alpha(A)$ acts trivially on $Z(A)$.
			\item $Z(A)$ and $\Ker \alpha$ inherit an action of $\Cok \alpha$ to become $(\Cok \alpha)$-modules.
		\end{enumerate}
	\end{proposition}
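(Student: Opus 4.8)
The plan is to obtain all four statements by direct manipulation of the two axioms \ref{CM1} and \ref{CM2}, working throughout in the additive notation used in the paper (so the identity of $B$ is $0$, conjugation is $b+\beta-b$, and the action is written $b\cdot a$).

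For (i), take $b\in B$ and $a\in A$; by \ref{CM1} we have $b+\alpha(a)-b=\alpha(b\cdot a)\in\alpha(A)$, so $\alpha(A)$ is closed under conjugation by $B$ and hence is a normal subgroup. For (ii), let $a\in\Ker\alpha$ and $a_1\in A$; then \ref{CM2} gives $a+a_1-a=\alpha(a)\cdot a_1=0\cdot a_1=a_1$, using that $\alpha(a)=0$ acts trivially, so $a+a_1=a_1+a$ and $a\in Z(A)$. For (iii), every element of $\alpha(A)$ has the form $\alpha(a)$ for some $a\in A$, and for $z\in Z(A)$ axiom \ref{CM2} yields $\alpha(a)\cdot z=a+z-a=z$ since $z$ is central; thus $\alpha(A)$ acts trivially on $Z(A)$.

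For (iv), I would first check that $Z(A)$ and $\Ker\alpha$ are $B$-invariant subgroups. If $\alpha(a)=0$ then $\alpha(b\cdot a)=b+0-b=0$ by \ref{CM1}, so $b\cdot a\in\Ker\alpha$. For $z\in Z(A)$ and $b\in B$, write an arbitrary $a\in A$ as $a=b\cdot a'$ with $a'=(-b)\cdot a$; since $b$ acts as a group automorphism, $b\cdot z+a=b\cdot z+b\cdot a'=b\cdot(z+a')=b\cdot(a'+z)=a+b\cdot z$, so $b\cdot z\in Z(A)$. Both subgroups are abelian ($Z(A)$ by definition, and $\Ker\alpha\subseteq Z(A)$ by (ii)). By (iii), $\alpha(A)$ acts trivially on $Z(A)$, and for $k\in\Ker\alpha$ axiom \ref{CM2} gives $\alpha(a)\cdot k=a+k-a=k$ by (ii), so $\alpha(A)$ acts trivially on $\Ker\alpha$ as well. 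Therefore the $B$-action on each of $Z(A)$ and $\Ker\alpha$ factors through the quotient $B\twoheadrightarrow B/\alpha(A)=\Cok\alpha$, making both abelian groups into $\Cok\alpha$-modules.

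None of these steps presents a serious obstacle, since each is a short computation from the axioms; the only place demanding a little care is the $B$-invariance of $Z(A)$ in part (iv), where one must genuinely use that elements of $B$ act on $A$ as automorphisms and not merely as set bijections.
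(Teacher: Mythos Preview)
Your argument is correct and is exactly the standard verification one would expect; each of (i)--(iv) follows directly from \ref{CM1}, \ref{CM2}, and the fact that $B$ acts by automorphisms, and your treatment of the $B$-invariance of $Z(A)$ in (iv) is the right point to single out as the only step needing a moment's thought.

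There is, however, nothing to compare against: the paper does not give a proof of this proposition. It is introduced with the sentence ``The following are some standard properties of crossed modules'' and stated without argument, so your write-up simply supplies what the paper leaves to the reader.
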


	Let $(A,B,\alpha)$ and $(A',B',\alpha')$ be two crossed modules. A morphism  $(f_1,f_2)$
	from $(A,B,\alpha)$ to $(A',B',\alpha')$  is a pair of morphisms of groups
	$f_1\colon A\rightarrow A'$ and $f_2\colon B\rightarrow B'$  such that
	$f_2\alpha=\alpha'f_1$ and  $f_1(b\cdot a)=f_2(b)\cdot f_1(a)$ for  $a\in A$ and $b\in B$.
	
	Crossed modules with morphisms between them form a category  denoted by $\XMod$.

	The following result was proved in \cite[Theorem 1]{BS1} and since we need some details of the proof, we give a sketch proof of the equivalence.
	
	\begin{theorem}
		\label{Theocatequivalence} The category $\XMod$  of crossed modules
		and the category $\GpGd$ of group-groupoids are equivalent.
	\end{theorem}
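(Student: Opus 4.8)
The plan is to prove the equivalence by constructing a functor $\GpGd\to\XMod$ and a functor $\XMod\to\GpGd$ and showing that the two composites are naturally isomorphic to the respective identity functors.

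Given a group-groupoid $G$, set $B=\Ob(G)$, which is a group, and $A=\St_G e$, the star at the identity $e$ of $B$. Since $d_0$ is a group morphism, $A$ is a subgroup of $(G,+)$ (the additive identity $1_e$ lies in it), and $\alpha=d_1|_A\colon A\to B$ is a group morphism as a restriction of $d_1$. Let $B$ act on $A$ by $b\cdot a=1_b+a-1_b$; this lands in $A$ and is a genuine action because $b\mapsto 1_b$ is a group morphism. Axiom \ref{CM1} is immediate on applying $d_1$. For \ref{CM2} one exploits the interchange rule: writing $a\colon e\to\alpha(a)$, the element $1_{\alpha(a)}+a_1-1_{\alpha(a)}$ equals, by interchange, $(a+a_1-a)\circ(a^{-1}+1_e-a^{-1})$, and since $1_e$ is the additive identity the second factor is $1_e$, giving $\alpha(a)\cdot a_1=a+a_1-a$. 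A morphism of group-groupoids restricts on objects and on the stars at the identities to a morphism of the associated crossed modules, so this assignment is functorial.

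Conversely, from a crossed module $(A,B,\alpha)$ I would build a group-groupoid $G$ with object group $B$ and morphism group the semidirect product $A\rtimes B$, with product $(a,b)+(a_1,b_1)=(a+b\cdot a_1,\ b+b_1)$, source $d_0(a,b)=b$, target $d_1(a,b)=\alpha(a)+b$, identities $1_b=(0,b)$, inverse $(a,b)^{-1}=(-a,\alpha(a)+b)$, and composition $(a',b')\circ(a,b)=(a'+a,b)$ whenever $b'=\alpha(a)+b$. That $d_0,d_1$ are group morphisms preserving identities is clear (using \ref{CM1} for $d_1$); the real content is that the interchange law holds, and a short computation shows it is equivalent to \ref{CM2}. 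A morphism $(f_1,f_2)$ of crossed modules induces $(a,b)\mapsto(f_1(a),f_2(b))$, which is a morphism of group-groupoids, so this assignment is functorial too.

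Finally I would compare the composites. Starting from $(A,B,\alpha)$: in $A\rtimes B$ the star at $0$ is $\{(a,0)\mid a\in A\}\cong A$, the target map carries $(a,0)$ to $\alpha(a)$, and conjugation by $1_b=(0,b)$ reproduces $b\cdot a$, so $(A,B,\alpha)$ is recovered up to a canonical isomorphism, natural in the crossed module. Starting from a group-groupoid $G$: every $g\in G$ can be written uniquely as $g=a+1_{d_0(g)}$ with $a=g-1_{d_0(g)}\in\St_G e$, which yields a bijection $G\to(\St_G e)\rtimes\Ob(G)$ that, again by the interchange rule, is an isomorphism of group-groupoids, natural in $G$. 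I expect the main obstacle to be the routine but somewhat lengthy bookkeeping---checking the interchange law for $A\rtimes B$ and, symmetrically, \ref{CM1}--\ref{CM2} for $\St_G e$; a secondary point requiring care is fixing conventions for the semidirect product and the action so that $d_1$ becomes a group morphism on the nose and the two composite functors return the original data rather than a twisted form of it.
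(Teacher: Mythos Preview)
Your proposal is correct and follows essentially the same construction as the paper: the paper's sketch takes $A=\Ker d_0=\St_G e$, $B=\Ob(G)$, action $b\cdot a=1_b+a-1_b$, and in the other direction builds the semidirect product $A\rtimes B$ with exactly the source, target and composition you wrote. Your outline is in fact more complete than the paper's sketch (which omits the verification of \ref{CM2}, the interchange law for $A\rtimes B$, and the natural isomorphisms for the two composites), so nothing needs to change.
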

	\begin{proof}
		Let $\GpGd$ be the category of group-groupoids and  $\XMod$  the category
		of crossed modules.
		
		A functor $\delta\colon \GpGd\rightarrow \XMod$ is defined as follows. For a group-groupoid $G$ let  $\delta(G)$ be the crossed module $(A,B,d_1)$ where $A=\Ker d_0$, $B=\Ob(G)$ and $d_1\colon A\rightarrow B$ is the restriction of the target point map . Then $A, B$ inherit group structures from that of $G$, and the target point map
		$d_1\colon A\rightarrow B$ is a morphism of groups. Further we have an
		action $B\times A\rightarrow A$, $(b,a)\mapsto $ $b\cdot a$ of $B$ on
		the group $A$ given by
		\begin{equation}
			b\cdot a=1_{b}+a-1_{b} \label{actioneq}\end{equation}
		for  $a\in A$, $b\in
		B$ and we clearly have
		\begin{equation}
			d_1(b\cdot a)=b+d_1(a)-b \label{xmod1}
		\end{equation}
		and
		\begin{equation}
			d_1(a)\cdot a_{1}=a+a_{1}-a \label{xmod2}
		\end{equation}
		for $a,a_{1}\in A$, $b\in B$. Thus $(A,B,d_1)$ is a crossed module.
		
		Conversely define a functor $\eta\colon \XMod\rightarrow \GpGd$ in the following way. For a crossed module  $(A,B,\alpha)$  let $\eta(A,B,\alpha)$ be the group-groupoid whose object set (group)  is the
		group $B$ and whose  group of the morphisms is the semi-direct group
		$A\rtimes B$ with the usual group structure
		\begin{equation}
			(a_1,b_1)+(a,b)=( a_1+b_1\cdot a,b_1+b) \label{semidrectproduct}
		\end{equation}
		The  source and target  point maps are defined to be $d_0(a,b)= b$ and   $d_1(a,b)= \alpha(a)+b$, while the groupoid
		composition is given by \[(a_1,b_1)\circ (a,b)=(a_1+a,b)\] whenever
		$b_1=\alpha(a)+b$.
	\end{proof}

	\begin{proposition} Let $G$ be a  group-groupoid and $(A,B,\alpha)$ the  crossed module  corresponding to $G$. If $G$ is transitive (resp. simply transitive, 1-transitive and totally intransitive) then $\alpha$ is surjective (resp. injective,
		bijective; and zero morphism such that $A$ is abelian).
	\end{proposition}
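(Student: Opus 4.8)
The plan is to reduce all four implications to one elementary computation identifying the hom-sets of $G$ in terms of the associated crossed module $(A,B,\alpha)=\delta(G)$, where $A=\Ker d_0$, $B=\Ob(G)$ and $\alpha=d_1|_A$. Given $x,y\in B=\Ob(G)$ and a morphism $g\in G(x,y)$, the element $g-1_x$ lies in $\Ker d_0=A$ because $d_0$ is a group morphism, and $d_1(g-1_x)=y-x$, so $\alpha(g-1_x)=y-x$; conversely $a\mapsto a+1_x$ sends $\alpha^{-1}(y-x)\subseteq A$ into $G(x,y)$, and the two assignments are mutually inverse. (Equivalently, under the identification $G\cong\eta(A,B,\alpha)$ furnished by Theorem~\ref{Theocatequivalence}, a morphism $x\to y$ is a pair $(a,x)$ with $\alpha(a)=y-x$.) Thus $G(x,y)=\emptyset$ unless $y-x\in\Im\alpha$, and when $y-x\in\Im\alpha$ the set $G(x,y)$ is a coset of $\Ker\alpha$, hence has exactly $|\Ker\alpha|$ elements.

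From this each case falls out. If $G$ is transitive, then for $b\in B$ with $b\ne e$ the set $G(e,b)$ is non-empty, so $b=b-e\in\Im\alpha$; since also $e=\alpha(0)\in\Im\alpha$, we conclude $\Im\alpha=B$, i.e.\ $\alpha$ is surjective. If $G$ is simply transitive, then $|G(x,y)|\le1$ for all $x\ne y$; provided $\Im\alpha$ contains some $b\ne e$, applying this to $(x,y)=(e,b)$ gives $|\Ker\alpha|\le1$, so $\Ker\alpha$ is trivial and $\alpha$ is injective. The $1$-transitive case is then just the conjunction of the two previous ones: $|G(x,y)|=1$ for all $x\ne y$ forces simultaneously $\Im\alpha=B$ and $\Ker\alpha=\{e\}$, so $\alpha$ is bijective.

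For the totally intransitive case, $G(x,y)=\emptyset$ for all $x\ne y$ says $y-x\notin\Im\alpha$ whenever $x\ne y$, hence $\Im\alpha\subseteq\{e\}$ and $\alpha$ is the zero morphism. It then remains only to see that $A$ is abelian, which follows by substituting $\alpha(a)=e$ into the crossed-module axiom \ref{CM2}: $a_1=e\cdot a_1=\alpha(a)\cdot a_1=a+a_1-a$ for all $a,a_1\in A$.

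The computations themselves are routine; the one point I expect to need care — and the nearest thing to an obstacle — is a mild degeneracy. A crossed module with $\alpha=0$ produces, by the computation above, a group-groupoid that is totally intransitive and hence also, vacuously, simply transitive, yet $\alpha$ is injective only when $A$ is trivial. So the simply-transitive clause must be read with the tacit proviso that $G$ is not totally intransitive — equivalently, that $G(x,y)\ne\emptyset$ for some $x\ne y$ — which is precisely the case in which simple transitivity is a non-vacuous hypothesis and is exactly the configuration used above; the one-object situation is degenerate in the same way and should be excluded similarly.
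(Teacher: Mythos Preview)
Your proof is correct and follows exactly the route the paper's one-line proof gestures at --- reading each clause off the bijection $G(x,y)\cong\alpha^{-1}(y-x)$ implicit in the construction of Theorem~\ref{Theocatequivalence}. Your final paragraph is in fact more careful than the paper: the vacuous simply-transitive edge case you isolate (a totally intransitive $G$ with nontrivial $A$) is real and not addressed there, so the tacit proviso you supply is genuinely needed for the statement as written.
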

	
	\begin{proof}
		The proof can be followed by the details of the proof of Theorem \ref{Theocatequivalence}.
	\end{proof}
	
	Hence we can state the following definition.
	\begin{definition}
		Let $(A,B,\alpha)$ be a crossed module. Then $(A,B,\alpha)$ is called \em{transitive (resp. simply transitive, 1-transitive and totally intransitive)} if $\alpha$ is surjective (resp. injective, bijective; and zero morphism such that  $A$ is abelian).\end{definition}
	
	\begin{example} If $X$ is a topological group whose underlying topology is path-connected (resp. totally disconnected), then the crossed module $(\St_{\pi X}0,X,d_1)$ is transitive (resp. totally intransitive).
	\end{example}

	\section{Liftings of crossed modules}
	
	In this section using Theorem \ref{Theocatequivalence}, we determine the notion in crossed modules, corresponding to the action of a group-groupoid on a group and interpret  the properties in crossed modules corresponding to group-groupoid actions  .
	
	Let $G$ be a group-groupoid acting on a group $X$ by an action $G_{d_0}\times_{\omega} X,(g,x)\mapsto g\bullet x$, via a group morphism $\omega\colon X\rightarrow \Ob(G)$ and let $(A,B,\alpha)$ be the crossed module corresponding to $G$. Then we have  a morphism $\omega\colon X\rightarrow B$ of groups and an action of $X$ on $A=\St_{G}0$ defined by
	\begin{equation}
		X\times A\rightarrow A, x\cdot a=1_{\omega(x)}+a-1_{\omega(x)} \label{Eqaction}
	\end{equation}
	By the group-groupoid action of $G$ on $X$ we have  a group morphism
	\begin{equation}
		\varphi\colon A\rightarrow X, a\mapsto \varphi(a)=a\bullet 0_X  \label{Eqgrpmorphism}
	\end{equation}
	such that $\omega\varphi=\alpha$, where $0_X$ is the identity element of the group $X$.
	
	Then we prove the following theorems.
	
	\begin{theorem} By the action of $X$ on $A$ defined above, $(A,X,\varphi)$ becomes a crossed module.
	\end{theorem}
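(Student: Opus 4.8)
The plan is to verify the two crossed-module axioms CM1 and CM2 for $(A,X,\varphi)$, after first recording that $\varphi$ is a morphism of groups and that \eqref{Eqaction} really defines an action of $X$ on $A$ by automorphisms. For these preliminary points, applying the interchange law \eqref{interchangeaction} with $g=a$, $g'=a_{1}$ in $A=\St_{G}0$ and $x=x'=0_{X}$ gives
\[
\varphi(a)+\varphi(a_{1})=(a\bullet 0_{X})+(a_{1}\bullet 0_{X})=(a+a_{1})\bullet 0_{X}=\varphi(a+a_{1}),
\]
so $\varphi$ is a group morphism (and $\omega\varphi=\alpha$ has already been observed). Moreover \eqref{Eqaction} is precisely the $B$-action \eqref{actioneq} on $A$ transported along $\omega$: since $s\mapsto 1_{s}$ and $\omega$ are morphisms of groups we have $1_{\omega(0_{X})}=1_{0}$ and $1_{\omega(x+x')}=1_{\omega(x)}+1_{\omega(x')}$, from which $0_{X}\cdot a=a$, $(x+x')\cdot a=x\cdot(x'\cdot a)$ and additivity of $a\mapsto x\cdot a$ follow; also $d_{0}(x\cdot a)=\omega(x)+0-\omega(x)=0$, so that $x\cdot a\in A$. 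Hence $X$ acts on $A$ by automorphisms.

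The heart of the argument is CM1. I would evaluate $\varphi(x\cdot a)=\bigl(1_{\omega(x)}+a-1_{\omega(x)}\bigr)\bullet 0_{X}$ by writing the argument in $X$ as $0_{X}=x+0_{X}+(-x)$ and applying the interchange law \eqref{interchangeaction}; this is legitimate because the three partial actions involved are defined, as $d_{0}(1_{\omega(x)})=\omega(x)$, $d_{0}(a)=0=\omega(0_{X})$ and $d_{0}(-1_{\omega(x)})=-\omega(x)=\omega(-x)$. The interchange law then yields
\[
\varphi(x\cdot a)=\bigl(1_{\omega(x)}\bullet x\bigr)+\bigl(a\bullet 0_{X}\bigr)+\bigl((-1_{\omega(x)})\bullet(-x)\bigr).
\]
Here the middle term is $\varphi(a)$ by \eqref{Eqgrpmorphism}, the first term equals $x$ by the action identity $1_{\omega(s)}\bullet s=s$, and, writing $-1_{\omega(x)}=1_{-\omega(x)}=1_{\omega(-x)}$, the last term is $1_{\omega(-x)}\bullet(-x)=-x$. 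Thus $\varphi(x\cdot a)=x+\varphi(a)-x$, which is CM1.

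Finally, CM2 reduces to the corresponding axiom for $(A,B,\alpha)$. Since $\omega\varphi=\alpha$, the $X$-action of $\varphi(a)$ on $a_{1}$ coincides with the $B$-action of $\alpha(a)$ on $a_{1}$; indeed by \eqref{Eqaction} and \eqref{actioneq}
\[
\varphi(a)\cdot a_{1}=1_{\omega(\varphi(a))}+a_{1}-1_{\omega(\varphi(a))}=1_{\alpha(a)}+a_{1}-1_{\alpha(a)}=\alpha(a)\cdot a_{1},
\]
and equation \eqref{xmod2}, which is CM2 for $(A,B,\alpha)$, gives $\alpha(a)\cdot a_{1}=a+a_{1}-a$; hence $\varphi(a)\cdot a_{1}=a+a_{1}-a$. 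Combining the three steps shows $(A,X,\varphi)$ is a crossed module. The only genuinely computational point is the three-fold interchange-law manipulation in CM1 — choosing the decomposition $0_{X}=x+0_{X}+(-x)$ and checking definedness before invoking \eqref{interchangeaction} — and that is where I would expect any difficulty to lie; the remainder is bookkeeping resting on \eqref{Eqaction} being the pulled-back $B$-action and on $\omega\varphi=\alpha$.
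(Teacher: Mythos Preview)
Your proof is correct and follows essentially the same approach as the paper: both verify CM1 via the interchange law \eqref{interchangeaction} together with the action identity $1_{\omega(s)}\bullet s=s$, and both reduce CM2 to the corresponding axiom \eqref{xmod2} for $(A,B,\alpha)$. Your CM1 computation is in fact a bit tidier---you use a single three-term decomposition $0_{X}=x+0_{X}+(-x)$ and one application of interchange, whereas the paper first splits $0_{X}=x+(-x)$ into two terms and then splits each summand again---but the ingredients and the logical structure are the same.
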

	\begin{proof} We prove that the conditions [CM1] and [CM2] are satisfied.
		
		\begin{enumerate}[label=\textbf{[CM\arabic{*}]}, leftmargin=1.5cm]
			\item For  all $a\in A$ and $x\in X$ we have the following evaluations
			\begin{align*}
				\varphi(x\cdot a)  & = \varphi(1_{\omega(x)}+a-1_{\omega(x)}) \tag{by Eq. \ref{Eqaction}}\\
				& = (1_{\omega(x)}+a-1_{\omega(x)})\bullet 0_X\tag{by Eq. \ref{Eqgrpmorphism}}\\
				& = (1_{\omega(x)}+(a+1_{\omega(-x)}))\bullet (x+(-x)) \\
				& = (1_{\omega(x)}\bullet x)+((a+1_{\omega(-x)})\bullet(-x))\tag{by Eq. \ref{interchangeaction}}\\
				& = (0_A+1_{\omega(x)})\bullet(0_X+x)+(a+1_{\omega(-x)})\bullet(0_X+(-x))
			\end{align*}
			and by the interchange law ( \ref{interchangeaction})
			\begin{align*}
				\varphi(x\cdot a) & = (0_A\bullet 0_X)+(1_{\omega(x)}\bullet x)+(a\bullet 0_X)+(1_{\omega(-x)}\bullet(-x)) \\
				& = \varphi(0_A)+x+\varphi(a)-x \tag{by Eq. \ref{Eqgrpmorphism}}\\
				& = x+\varphi(a)-x
			\end{align*}
			\item For all $a,a_1\in A$ we have the following steps
			\begin{align*}
				\varphi(a)\cdot a_1  & = (a\bullet 0_X)\cdot a_1 \tag{by  Eq. \ref{Eqgrpmorphism}}\\
				& = 1_{\omega(a\bullet 0_X)}+a_1-1_{\omega(a\bullet 0_X)} \tag{by Eq. \ref{Eqaction}}\\
				& = 1_{d_1(a)}+a_1-1_{d_1(a)}   \tag{ by $\omega(a\bullet 0_X)=d_1(a)$}\\
				& = d_1(a)\cdot a_1 \tag{by Eq. \ref{actioneq}}\\
				& = a+a_1-a \tag{by Eq. \ref{xmod2}}
			\end{align*}
		\end{enumerate}
	\end{proof}
	\begin{theorem} \label{Acrosmodifredmorpis}
		Let $(A,B,\alpha)$ be a crossed module and $\omega\colon X\rightarrow B$ a group morphism. Then any group morphism
		$\varphi\colon A\rightarrow X$  such that  $\omega\varphi=\alpha$ is a  crossed module with the action  defined via $\omega$ if and only if  the map $ \overline{\varphi}\colon A\rtimes X\rightarrow X$ defined by
		\begin{equation}
			\overline{\varphi}(a,x)=\varphi(a)+x \label{reducedmorp}
		\end{equation}
		is a group morphism.
	\end{theorem}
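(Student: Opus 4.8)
The plan is to reduce both implications to a single short computation. Write the action of $X$ on $A$ as $x\cdot a = 1_{\omega(x)}+a-1_{\omega(x)}$, i.e. $x\cdot a = \omega(x)\cdot a$ in terms of the given $B$-action on $A$; since $\omega$ is a group morphism and $B$ acts on $A$ by automorphisms, this is an action of $X$ on $A$ by automorphisms, so the semidirect product $A\rtimes X$ appearing in the statement is genuinely a group. Now $(A,X,\varphi)$ being a crossed module with this action means precisely that the two axioms $\varphi(x\cdot a)=x+\varphi(a)-x$ and $\varphi(a)\cdot a_1=a+a_1-a$ hold for all $a,a_1\in A$ and $x\in X$.

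The first observation is that the second axiom holds automatically. Indeed $\varphi(a)\cdot a_1=\omega(\varphi(a))\cdot a_1=\alpha(a)\cdot a_1$ because $\omega\varphi=\alpha$, and $\alpha(a)\cdot a_1=a+a_1-a$ holds since $(A,B,\alpha)$ is a crossed module. Hence $(A,X,\varphi)$ is a crossed module if and only if the first axiom $\varphi(x\cdot a)=x+\varphi(a)-x$ holds for all $a\in A$ and $x\in X$.

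Next I would unwind the condition that $\overline\varphi\colon A\rtimes X\to X$, $\overline\varphi(a,x)=\varphi(a)+x$, be a group morphism. Using the multiplication $(a_1,x_1)+(a,x)=(a_1+x_1\cdot a,\,x_1+x)$ together with the hypothesis that $\varphi$ is already a group morphism, one computes $\overline\varphi\big((a_1,x_1)+(a,x)\big)=\varphi(a_1)+\varphi(x_1\cdot a)+x_1+x$, whereas $\overline\varphi(a_1,x_1)+\overline\varphi(a,x)=\varphi(a_1)+x_1+\varphi(a)+x$. Cancelling $\varphi(a_1)$ on the left and $x$ on the right (legitimate even in the non-abelian case, since these are the leftmost and rightmost terms), the morphism condition becomes $\varphi(x_1\cdot a)+x_1=x_1+\varphi(a)$ for all $a\in A$, $x_1\in X$, that is, $\varphi(x_1\cdot a)=x_1+\varphi(a)-x_1$ — which is exactly the first crossed-module axiom isolated above.

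Combining the two previous paragraphs settles both directions at once: $(A,X,\varphi)$ is a crossed module $\iff$ the first axiom holds $\iff$ $\overline\varphi$ is a group morphism (for the reverse direction one reads the computation backwards to recover the first axiom, and the second axiom is already available for free). I do not expect a real obstacle; the only points that merit an explicit line are the well-definedness of $A\rtimes X$ as a group and the bookkeeping in the cancellations, both of which are routine.
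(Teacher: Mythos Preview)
Your proposal is correct and follows essentially the same route as the paper: both arguments note that CM2 is automatic because the $X$-action factors through $\omega$ and $(A,B,\alpha)$ already satisfies CM2, and then reduce the statement to the equivalence of CM1 with $\overline{\varphi}$ being multiplicative. The only cosmetic difference is that the paper verifies the two implications separately (using the decomposition $(x\cdot a,0)=(0,x)+(a,-x)$ for one direction and a direct expansion for the other), whereas you carry out one general expansion and read it in both directions; the content is the same.
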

	\begin{proof}  Suppose that the map  $ \overline{\varphi}\colon A\rtimes X\rightarrow X$  defined by $\overline{\varphi}(a,x)=\varphi(a)+x$ is a group morphism. Then we  have the following evaluations to prove that  the axioms  [CM1] and [CM2] of crossed module are satisfied for the group morphism $\varphi\colon A\rightarrow X$.
		\begin{enumerate}[label=\textbf{[CM\arabic{*}]}, leftmargin=2cm]
			\item  For $x\in X$ and $a\in A$ we have that
			\begin{align*}
				\varphi(x\cdot a) = \overline{\varphi}(x\cdot a,0) & = \overline{\varphi}((0,x)+(a,-x)) \tag{by  Eq. \ref{semidrectproduct}}\\
				& = \overline{\varphi}(0,x)+\overline{\varphi}(a,-x)\\
				& = \varphi(0)+x+\varphi(a)-x  \tag{by Eq. \ref{reducedmorp}}\\
				& = x+\varphi(a)-x
			\end{align*}
			
			\item  Since the action of $X$ on $A$ is defined by means of the action of $B$ on $A$ via $\omega$, the morphism  $\varphi$ satisfies the condition [\ref{CM2}] of crossed module.
			
		\end{enumerate}

		Conversely assume that $\varphi\colon A\rightarrow X$ is a  crossed module.  Then by the  following evaluation the map $ \overline{\varphi}\colon A\rtimes X\rightarrow X$ becomes a group morphism.
		\begin{align*}
			\overline{\varphi}((a,x)+(a_1,x_1))  & = \overline{\varphi}(a+x\cdot a_1,x+x_1) \tag{by  Eq. \ref{semidrectproduct}}\\
			& = \varphi(a+x\cdot a_1)+(x+x_1) \tag{by  Eq. \ref{reducedmorp}}  \\
			& = \varphi(a)+\varphi(x\cdot a_1)+(x+x_1) \\
			& = \varphi(a)+x+\varphi(a_1)-x+x+x_1 \tag{by  \ref{CM1}}\\
			& = \varphi(a)+x+\varphi(a_1)+x_1 \\
			& = \overline{\varphi}(a,x)+\overline{\varphi}(a_1,x_1) \tag{by  Eq. \ref{reducedmorp}}.
		\end{align*}
	\end{proof}

	We now define the notion of lifting of a crossed module as follows:
	
	\begin{definition}  Let $(A,B,\alpha)$ be a crossed module and $\omega\colon X\rightarrow B$ a morphism of groups.  Then a crossed module $(A,X,\varphi)$ in which  the action of $X$ on $A$ is defined via $\omega$, is a called a \emph{lifting of $\alpha$ over $\omega$} and  denoted by  $(\varphi,X,\omega)$ if the following diagram is commutative
		\[\xymatrix{ & X \ar[d]^\omega\\
			A \ar[r]_-\alpha \ar@{-->}[ur]^\varphi & B}\]
	\end{definition}
	
	It is obvious that every crossed module $(A,B,\alpha)$ lifts to itself over the identity morphism $1_B$ on $B$.
	
	\begin{remark}\label{remlift} In the following diagram if $(\varphi,X,\omega)$ is a lifting of $(A,B,\alpha)$, then $\Ker\varphi\subseteq\Ker\alpha$ and $(1_A,\omega)$ is a morphism of crossed modules
		\[\xymatrix{A \ar@{=}[d]_{1_A} \ar@{-->}[r]^\varphi & X \ar[d]^\omega\\
			A \ar[r]_-\alpha  & B}\]
		Therefore  if $(A,B,\alpha)$ is a simply transitive crossed module then $\Ker\alpha$ is trivial and so also $\Ker\varphi$ is. Hence the crossed module $(A,X,\varphi)$ is also simply transitive.
	\end{remark}
	
	The following can be stated as examples  of  lifting crossed modules.
	\begin{example}\label{autxmod}
		A crossed module $(A,B,\alpha)$ is a lifting of the automorphism crossed module $(A,\Aut(A),\iota)$ over the action of $B$ on $A$, i.e. $\theta\colon B\rightarrow \Aut(A)$, $\theta(b)(a)=b\cdot a$
		\[\xymatrix{  & B \ar[d]^\theta\\
			A \ar[r]_-\iota \ar[ur]^\alpha  & \Aut(A)}\]
	\end{example}
	
	\begin{example} Let $N$ be a normal subgroup of a group $G$. Since $(N,G,\inc)$ is a simply transitive crossed module, any lifting crossed module $(N,X,\varphi)$ is also simply transitive. Moreover $\varphi(N)$ is a normal subgroup of $X$. Since $\varphi$ is injective $N$ also can be consider as a normal subgroup of $X$.
	\end{example}
	
	\begin{example}
		Let $G$ be a group with trivial center. Then the automorphism crossed module $G\rightarrow \Aut{G}$ is simply transitive and hence every lifting of $G\rightarrow \Aut{G}$ is also simply transitive.
	\end{example}
	
	\begin{example}\label{topliftexam}
		Let $p\colon \w{X}\rightarrow X$ be a covering morphism of topological groups.  Hence  $(\St_{\pi X}0 ,X,d_{1})$ is a crossed module. If $\alpha$ is a path in $X$ with initial point $0\in X$, the identity, then by the path lifting property there exists a unique path $\w{\alpha}$ in $\w X$ such that $p\w{\alpha}=\alpha$ and $\w{\alpha}(0)=\tilde{0}\in \widetilde{X}$, the identity. Hence we can define a function $\w{d_1}\colon \St_{\pi X}0\rightarrow \w{X}$ assigning  $[\alpha]\in \St_{\pi X}0$ to the  final point $\w{\alpha}(1)$ of the lifting path of $\alpha$ at ${\w{0}}$. It follows that  $\w{d_1}$ is well defined and $(\w{d_1},\w{X},p)$ is a lifting of $(\St_{\pi X}0 ,X,d_1)$.
	\end{example}

	We now state some results on lifting crossed modules.
	\begin{lemma}
		Let $(A,B,\alpha)$ be a crossed module and $\varphi$ a lifting of $\alpha$ over $\omega\colon X\rightarrow B$. If
		there are isomorphisms $f\colon B\rightarrow B'$ and $g\colon X'\rightarrow X$ for some groups $B'$ and $G'$ then $\varphi'$ is a
		lifting of $\alpha'$ over $\omega'\colon X'\rightarrow B'$ where $\varphi'=g^{-1}\varphi$, $\alpha'=f\alpha$ and $\omega'=f\omega g$.
	\end{lemma}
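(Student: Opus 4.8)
The plan is to prove the statement by transport of structure along the two isomorphisms, verifying directly that $(A,X',\varphi')$ is a crossed module and that the lifting triangle commutes. First I would fix the types: since $g\colon X'\to X$ and $\varphi\colon A\to X$, the composite $\varphi'=g^{-1}\varphi$ is a group morphism $A\to X'$, and since $f\colon B\to B'$ we get group morphisms $\alpha'=f\alpha\colon A\to B'$ and $\omega'=f\omega g\colon X'\to B'$. The triangle is then immediate,
\[\omega'\varphi'=(f\omega g)(g^{-1}\varphi)=f\omega\varphi=f\alpha=\alpha',\]
using $\omega\varphi=\alpha$, which holds because $\varphi$ is a lifting of $\alpha$ over $\omega$. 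Next I would make the phrase ``the action of $X'$ on $A$ via $\omega'$'' precise: since $\alpha'$ lands in $B'$, one transports the given $B$-action on $A$ to a $B'$-action by $b'\cdot a:=f^{-1}(b')\cdot a$, and a one-line check using [CM1] and [CM2] for $(A,B,\alpha)$ shows $(A,B',\alpha')$ is again a crossed module. With this, the action of $X'$ on $A$ via $\omega'$ reads $x'\cdot a=\omega'(x')\cdot a=f^{-1}(f\omega g(x'))\cdot a=\omega(g(x'))\cdot a=g(x')\cdot a$, i.e.\ it is obtained from the $X$-action on $A$ (via $\omega$) by precomposing with $g$.

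Then I would verify the two axioms for $(A,X',\varphi')$. For [CM1], for $x'\in X'$ and $a\in A$,
\[\varphi'(x'\cdot a)=g^{-1}\varphi\bigl(g(x')\cdot a\bigr)=g^{-1}\bigl(g(x')+\varphi(a)-g(x')\bigr)=x'+g^{-1}\varphi(a)-x'=x'+\varphi'(a)-x',\]
using [CM1] for $(A,X,\varphi)$ and that $g^{-1}$ is a group morphism. For [CM2], for $a,a_1\in A$,
\[\varphi'(a)\cdot a_1=g\bigl(g^{-1}\varphi(a)\bigr)\cdot a_1=\varphi(a)\cdot a_1=a+a_1-a,\]
using the description of the $X'$-action and [CM2] for $(A,X,\varphi)$. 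Together with the commuting triangle, this shows that $(A,X',\varphi')$ is a lifting of $\alpha'$ over $\omega'$, which is what we want.

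I do not expect a real obstacle here: the content is entirely a change of coordinates along isomorphisms, and every identity is a two-step unwinding of a definition. The one point that requires a little attention rather than being purely mechanical is the bookkeeping in the first paragraph — one must be explicit that $B'$ inherits an action on $A$ so that ``lifting of $\alpha'$ over $\omega'$'' is even meaningful, and that this inherited action is the one the $X'$-action factors through. If one wishes to avoid checking [CM1]/[CM2] by hand, an alternative is to invoke Theorem \ref{Acrosmodifredmorpis}: the map $(a,x')\mapsto(a,g(x'))$ is readily seen to be a group isomorphism $A\rtimes X'\to A\rtimes X$ (using exactly the description $x'\cdot a=g(x')\cdot a$ of the $X'$-action), and $\overline{\varphi'}$ factors as $g^{-1}\circ\overline{\varphi}\circ(\text{this isomorphism})$, hence is a group morphism; since also $\omega'\varphi'=\alpha'$, Theorem \ref{Acrosmodifredmorpis} yields the crossed module structure at once.
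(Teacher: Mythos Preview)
Your proposal is correct and follows essentially the same approach as the paper---transport of structure along the isomorphisms together with the verification $\omega'\varphi'=(f\omega g)(g^{-1}\varphi)=f\omega\varphi=f\alpha=\alpha'$. The paper's own proof is considerably terser: it simply asserts that $\alpha'=f\alpha$ and $\varphi'=g^{-1}\varphi$ are crossed modules ``since $f$ and $g$ are isomorphisms'' and then checks the triangle, whereas you unpack the transported $B'$- and $X'$-actions and verify \textbf{[CM1]} and \textbf{[CM2]} explicitly (and even offer the alternative via Theorem~\ref{Acrosmodifredmorpis}); this extra care is not required by the paper but is certainly not wrong.
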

	
	\begin{proof}
		It follows that $\alpha'=f\alpha$ and $\varphi'=g^{-1}\varphi$ are crossed modules since $f$ and $g$ are isomorphisms; and \begin{align*}
			\omega'\varphi'  &= (f\omega g)(g^{-1}\varphi) \\
			&=  f\omega \varphi
		\end{align*}
		Since $(\varphi,X,\omega)$ is a lifting of $(A,B,\alpha)$, i.e., $\omega \varphi=\alpha$ we have that
		\begin{align*}
			\omega'\varphi'  &= f\omega \varphi \\
			&=  f\alpha\\
			&=  \alpha'.
		\end{align*}
	\end{proof}
	
	\begin{proposition}\label{Proplifting}
		Let $(A,B,\alpha)$ be a crossed module and $(\varphi,X,\omega)$ a lifting of $(A,B,\alpha)$. If $(\varphi',X',\omega')$ is a lifting of $(A,X,\varphi)$ then $(\varphi',X',\omega\omega')$ is also a lifting of $(A,B,\alpha)$.
	\end{proposition}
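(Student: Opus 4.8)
\textit{Proof proposal.} Write $\psi=\omega\omega'\colon X'\to B$. The plan is to check the two things the definition of a lifting requires: first, that the triangle $\psi\varphi'=\alpha$ commutes, and second, that $(A,X',\varphi')$ is a crossed module when $X'$ is made to act on $A$ \emph{via $\psi$}. If both hold, then $(\varphi',X',\psi)$ is by definition a lifting of $(A,B,\alpha)$.

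The commutativity is immediate and I would dispose of it first. Since $(\varphi',X',\omega')$ is a lifting of $(A,X,\varphi)$ we have $\omega'\varphi'=\varphi$, and since $(\varphi,X,\omega)$ is a lifting of $(A,B,\alpha)$ we have $\omega\varphi=\alpha$; hence $\psi\varphi'=\omega(\omega'\varphi')=\omega\varphi=\alpha$.

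For the crossed module part, the point I would emphasise is that the three actions at play are mutually compatible, so there is essentially nothing new to verify. By definition the action of $X'$ on $A$ used in the crossed module $(A,X',\varphi')$ (a lifting over $\omega'$) is $x'\cdot a=\omega'(x')\cdot a$, where the right-hand action is that of $X$ on $A$; and the action of $X$ on $A$ in the crossed module $(A,X,\varphi)$ is in turn $x\cdot a=\omega(x)\cdot a$ with the action of $B$ on $A$ on the right. Composing the two reconstructions gives $x'\cdot a=\omega(\omega'(x'))\cdot a=\psi(x')\cdot a$, which is precisely the action of $X'$ on $A$ induced via $\psi$. Consequently the axioms \textbf{CM1} and \textbf{CM2}, already known to hold for $(A,X',\varphi')$ with the $\omega'$-action, are word for word the axioms for the $\psi$-action, so $(A,X',\varphi')$ is a crossed module over $\psi$ as well. (Alternatively one may invoke Theorem~\ref{Acrosmodifredmorpis} twice: it gives that $\overline{\varphi'}\colon A\rtimes X'\to X'$ is a group morphism for the $\omega'$-semidirect product, and since that semidirect product coincides with the $\psi$-semidirect product by the identity of actions just noted, the same map witnesses $\varphi'$ as a crossed module over $\psi$.)

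I do not expect any real obstacle here: the content is bookkeeping about how induced actions compose, and the proof should run to just a few lines. The only place to be careful is to state explicitly the identification of the $\omega'$-action with the $\psi$-action, since that is exactly what allows the crossed module axioms to be transported without recomputation; combined with $\psi\varphi'=\alpha$ from the second paragraph, this completes the proof that $(\varphi',X',\omega\omega')$ is a lifting of $(A,B,\alpha)$.
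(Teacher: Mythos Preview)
Your proposal is correct and is precisely the unpacking of what the paper intends: the paper's own proof is the single sentence ``The proof is immediate.'' Your two checks---that $\omega\omega'\varphi'=\alpha$ and that the $\omega'$-induced action of $X'$ on $A$ coincides with the $\omega\omega'$-induced action, so the crossed module axioms transfer verbatim---are exactly the content behind that word ``immediate,'' so there is no difference in approach.
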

	
	\begin{proof} The proof is immediate.\end{proof}

	Let $(\varphi,X,\omega)$ and $(\varphi',X',\omega')$ be two liftings of $(A,B,\alpha)$. A morphism $f$ from $(\varphi,X,\omega)$ to $(\varphi',X,\omega')$ is a group homomorphism $f\colon X\rightarrow X'$ such that $f\varphi=\varphi'$ and $\omega'f=\omega$. Hence lifting crossed modules of $(A,B,\alpha)$ and morphisms between them form a category which we denote by $\LXM/(A,B,\alpha)$. By  Proposition \ref{Proplifting} it follows that if $\varphi$ is a lifting of $(A,B,\alpha)$ over $\omega\colon X\rightarrow B$ then $\LXM/(A,X,\varphi)$ is a full subcategory of $\LXM/(A,B,\alpha)$.

	Let $(A,B,\alpha)$ be a transitive crossed module. Then a lifting  $(\varphi,X,\omega)$ of  $(A,B,\alpha)$   is called an \emph{$n$-lifting} when  $|\Ker \omega|=n$.
	\begin{corollary}
		If $(\varphi,X,\omega)$ is a 1-lifting of $(A,B,\alpha)$ then $\omega$ is an isomorphism. Hence $(A,X,\varphi)\cong(A,B,\alpha)$.
	\end{corollary}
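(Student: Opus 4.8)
The plan is to read off the bijectivity of $\omega$ directly from the defining data of a lifting, and then promote it to an isomorphism of crossed modules. First I would unwind the hypothesis: a $1$-lifting means $|\Ker\omega|=1$, so $\Ker\omega$ is trivial and the group morphism $\omega\colon X\rightarrow B$ is injective. Next, since $(A,B,\alpha)$ is transitive, by Definition the morphism $\alpha$ is surjective; but the lifting condition gives the factorization $\omega\varphi=\alpha$, and a composite which is surjective forces the outer factor $\omega$ to be surjective. Combining the two observations, $\omega$ is a bijective group homomorphism, hence an isomorphism.

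For the second assertion I would invoke Remark~\ref{remlift}: for any lifting $(\varphi,X,\omega)$ of $(A,B,\alpha)$ the pair $(1_A,\omega)$ is a morphism of crossed modules from $(A,X,\varphi)$ to $(A,B,\alpha)$ --- the square commutes because $\omega\varphi=\alpha$, and the action-compatibility condition $1_A(x\cdot a)=\omega(x)\cdot 1_A(a)$ holds precisely because the $X$-action on $A$ is the one pulled back along $\omega$. Once $\omega$ is known to be an isomorphism, the pair $(1_A,\omega^{-1})$ is a morphism of crossed modules $(A,B,\alpha)\rightarrow(A,X,\varphi)$ (again using $\omega\varphi=\alpha$ and that the action is defined via $\omega$), and it is a two-sided inverse of $(1_A,\omega)$. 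Hence $(1_A,\omega)$ is an isomorphism in $\XMod$, so $(A,X,\varphi)\cong(A,B,\alpha)$.

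There is essentially no serious obstacle here: the argument is a short consequence of the factorization $\omega\varphi=\alpha$, the surjectivity of $\alpha$ for transitive crossed modules, and the already-established fact (Remark~\ref{remlift}) that $(1_A,\omega)$ is a crossed module morphism. The one point worth stating carefully is that injectivity of $\omega$ is exactly the content of $|\Ker\omega|=1$, so that the term "$1$-lifting" supplies precisely the half of the bijectivity of $\omega$ that does not already follow from transitivity.
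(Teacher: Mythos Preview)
Your argument is correct and follows the same line as the paper: derive injectivity of $\omega$ from $|\Ker\omega|=1$, surjectivity of $\omega$ from the factorization $\omega\varphi=\alpha$ together with the transitivity hypothesis on $(A,B,\alpha)$, and conclude that $\omega$ is an isomorphism. In fact your write-up is more complete than the paper's own proof, which asserts surjectivity of $\omega$ without comment and leaves the isomorphism $(A,X,\varphi)\cong(A,B,\alpha)$ implicit; your use of Remark~\ref{remlift} to exhibit $(1_A,\omega)$ as the isomorphism in $\XMod$ is exactly the right way to finish.
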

	
	\begin{proof}
		If $(\varphi,X,\omega)$ is a 1-lifting of $(A,B,\alpha)$, then $\omega$ becomes surjective and $|\Ker \omega|=1$, i.e., $\omega$ is injective. Hence $\omega$ is an isomorphism.
	\end{proof}
	
	\begin{theorem}
		Let $(f,g)\colon(\widetilde{A},\widetilde{B},\widetilde{\alpha})\rightarrow(A,B,\alpha)$ be a morphism of  crossed modules  where $(\widetilde{A},\widetilde{B},\widetilde{\alpha})$ is transitive and let $(\varphi,X,\omega)$ be a lifting of $(A,B,\alpha)$. Then there is a unique morphism of crossed modules $(f,\widetilde{g})\colon(\widetilde{A},\widetilde{B},\widetilde{\alpha})\rightarrow(A,X,\varphi)$
		such that $\omega\widetilde{g}=g$ if and only if $f(\Ker \widetilde{\alpha})\subseteq\Ker \varphi$.
	\end{theorem}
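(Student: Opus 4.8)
The plan is to read this statement as the crossed‑module version of the classical lifting criterion for covering morphisms: transitivity of $(\widetilde A,\widetilde B,\widetilde\alpha)$, i.e.\ surjectivity of $\widetilde\alpha$, plays the role of connectedness of the domain, while $f(\Ker\widetilde\alpha)\subseteq\Ker\varphi$ plays the role of the usual image‑subgroup condition. A useful preliminary observation, which I would record first, is that in a lifting $(\varphi,X,\omega)$ the action of $X$ on $A$ is by definition the action of $B$ on $A$ pulled back along $\omega$; hence any pair $(f,\widetilde g)$ with $\omega\widetilde g=g$ automatically satisfies the action‑compatibility axiom, since $\widetilde g(b)\cdot f(\widetilde a)=\omega\widetilde g(b)\cdot f(\widetilde a)=g(b)\cdot f(\widetilde a)=f(b\cdot\widetilde a)$, the last equality because $(f,g)$ is already a morphism of crossed modules. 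So the whole content reduces to producing a group homomorphism $\widetilde g\colon\widetilde B\to X$ with $\widetilde g\widetilde\alpha=\varphi f$ and $\omega\widetilde g=g$.

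For necessity I would do a one‑line diagram chase: if such a $\widetilde g$ exists then $\widetilde g\widetilde\alpha=\varphi f$, so for $\widetilde a\in\Ker\widetilde\alpha$ we get $\varphi f(\widetilde a)=\widetilde g\widetilde\alpha(\widetilde a)=\widetilde g(0)=0$, whence $f(\widetilde a)\in\Ker\varphi$; this does not even use $\omega\widetilde g=g$. For sufficiency, assume $f(\Ker\widetilde\alpha)\subseteq\Ker\varphi$ and define $\widetilde g\colon\widetilde B\to X$ by choosing, for each $b\in\widetilde B$, some $\widetilde a\in\widetilde A$ with $\widetilde\alpha(\widetilde a)=b$ (available since $\widetilde\alpha$ is onto) and setting $\widetilde g(b)=\varphi f(\widetilde a)$. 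The one genuinely substantive step — really the only place the hypothesis is used — is well‑definedness: if $\widetilde\alpha(\widetilde a)=\widetilde\alpha(\widetilde a')$ then $\widetilde a-\widetilde a'\in\Ker\widetilde\alpha$, so $f(\widetilde a-\widetilde a')\in\Ker\varphi$ by hypothesis, and applying the homomorphisms $f$ and $\varphi$ yields $\varphi f(\widetilde a)=\varphi f(\widetilde a')$.

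Once $\widetilde g$ is well defined the remaining checks are routine: it is a homomorphism (use $\widetilde a_1+\widetilde a_2$ as a preimage of $b_1+b_2$); it satisfies $\widetilde g\widetilde\alpha=\varphi f$ by taking $\widetilde a$ itself as the representative of $\widetilde\alpha(\widetilde a)$; and $\omega\widetilde g=g$ because $\omega\widetilde g\widetilde\alpha=\omega\varphi f=\alpha f=g\widetilde\alpha$ while $\widetilde\alpha$ is epi, so $\omega\widetilde g$ and $g$ agree on all of $\widetilde B$. Together with the automatic action‑compatibility noted above, $(f,\widetilde g)$ is then the desired morphism of crossed modules. Uniqueness is immediate from surjectivity of $\widetilde\alpha$: any admissible $\widetilde g'$ must satisfy $\widetilde g'\widetilde\alpha=\varphi f$, which already determines $\widetilde g'$ on $\Im\widetilde\alpha=\widetilde B$. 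I do not expect a real obstacle here; the only thing to be slightly careful about is the non‑abelian group bookkeeping (the paper's additive notation does not presuppose commutativity) in the well‑definedness and homomorphism arguments.
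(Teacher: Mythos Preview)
Your proposal is correct and follows essentially the same route as the paper: define $\widetilde g(b)=\varphi f(\widetilde a)$ for any $\widetilde a$ with $\widetilde\alpha(\widetilde a)=b$, check well-definedness from the kernel hypothesis, verify $\omega\widetilde g=g$, and deduce uniqueness from surjectivity of $\widetilde\alpha$. Your write-up is in fact somewhat more careful than the paper's, which simply asserts that $(f,\widetilde g)$ is a crossed-module morphism ``by the definition of $\widetilde g$''; your preliminary observation that action-compatibility is automatic because the $X$-action on $A$ is pulled back along $\omega$ makes this step explicit, and your uniqueness argument via $\widetilde g'\widetilde\alpha=\varphi f$ is cleaner than the paper's one-line claim.
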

	
	\begin{proof}
		Assume that $f(\Ker \widetilde{\alpha})\subseteq\Ker \varphi$. For the existence; let $\widetilde{b}\in \widetilde{B}$. Since $(\widetilde{A},\widetilde{B},\widetilde{\alpha})$ is transitive, $\widetilde{\alpha}$ is surjective and there exists an $\widetilde{a}\in \widetilde{A}$ such that $\widetilde{\alpha}(\widetilde{a})=\widetilde{b}$. Hence $\widetilde{g}(\widetilde{b})=\varphi f(\widetilde{a})$.
		
		It is easy to see that $\widetilde{g}$ is well defined since $f(\Ker \widetilde{\alpha})\subseteq\Ker \varphi$. Also
		\begin{align*}
			\omega\widetilde{g}(\widetilde{b})  &= \omega\varphi f(\widetilde{a}) \\
			&=  \alpha f(\widetilde{a}) \\
			&=  g\widetilde{\alpha}(\widetilde{a}) \\
			&=  g(\widetilde{b}).
		\end{align*}
		So $\omega\widetilde{g}=g$. By the definition of $\widetilde{g}$ it implies that $(f,\widetilde{g})$ is a crossed module morphism. For any other morphism $g'\colon \widetilde{B}\rightarrow X$ such that $\omega g'=g$, the pair $(f,g')$ is a crossed module morphism and coincides with $\widetilde{g}$.
		
		Conversely suppose that $(f,\widetilde{g})\colon(\widetilde{A},\widetilde{B},\widetilde{\alpha})\rightarrow(A,X,\varphi)$ is a crossed module morphism with $\omega\widetilde{g}=g$. If  $\widetilde{a}\in \Ker \widetilde{\alpha}$, then $f(\widetilde{a})\in f(\Ker \widetilde{\alpha})$ and $\widetilde{\alpha}(\widetilde{a})=0$. Since $(f,\widetilde{g})$ is a crossed module morphism $\widetilde{g}\widetilde{\alpha}=\varphi f$ and hence $\varphi f(\widetilde{a})=0$. So $f(\widetilde{a})\in \Ker \varphi$ and hence $f(\Ker \widetilde{\alpha})\subseteq\Ker \varphi$.
	\end{proof}
	
	\begin{corollary}\label{charsub}
		Let $(A,B,\alpha)$ be a crossed module. Assume that $(\varphi,X,\omega)$ and $(\widetilde{\varphi},\widetilde{X},\widetilde{\omega})$ are two liftings of $(A,B,\alpha)$ such that  $(A,\widetilde{X},\widetilde{\varphi})$ is transitive. Then  $\widetilde{\varphi}$ is a lifting of $\varphi$ if and only if $\Ker \widetilde{\varphi}\subseteq \Ker \varphi$.
	\end{corollary}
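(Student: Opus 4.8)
The plan is to read this corollary off from the theorem immediately preceding it, applied to the crossed module morphism $(1_A,\widetilde\omega)$. First I would recall, via Remark~\ref{remlift}, that because $(\widetilde\varphi,\widetilde X,\widetilde\omega)$ is a lifting of $(A,B,\alpha)$ the pair $(1_A,\widetilde\omega)\colon(A,\widetilde X,\widetilde\varphi)\to(A,B,\alpha)$ is a morphism of crossed modules; by hypothesis $(A,\widetilde X,\widetilde\varphi)$ is transitive, and $(\varphi,X,\omega)$ is a lifting of $(A,B,\alpha)$. Hence the preceding theorem applies with $(\widetilde A,\widetilde B,\widetilde\alpha)=(A,\widetilde X,\widetilde\varphi)$, $f=1_A$ and $g=\widetilde\omega$: there is a (unique) crossed module morphism $(1_A,\widetilde g)\colon(A,\widetilde X,\widetilde\varphi)\to(A,X,\varphi)$ with $\omega\widetilde g=\widetilde\omega$ if and only if $\Ker\widetilde\varphi=1_A(\Ker\widetilde\varphi)\subseteq\Ker\varphi$. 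The whole corollary then amounts to translating the existence of such a $\widetilde g$ into the statement ``$\widetilde\varphi$ is a lifting of $\varphi$''.

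For the necessity half I would not even invoke the theorem: if $(\widetilde\varphi,\widetilde X,\psi)$ is a lifting of $(A,X,\varphi)$ then $\psi\widetilde\varphi=\varphi$, so any $a\in\Ker\widetilde\varphi$ satisfies $\varphi(a)=\psi(\widetilde\varphi(a))=\psi(0)=0$ and hence $a\in\Ker\varphi$; this is just the containment already recorded in Remark~\ref{remlift} for the lifting $(\widetilde\varphi,\widetilde X,\psi)$ of $(A,X,\varphi)$.

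For the sufficiency half, assuming $\Ker\widetilde\varphi\subseteq\Ker\varphi$, I would take the morphism $\widetilde g\colon\widetilde X\to X$ furnished above. Since its first component is $1_A$, being a crossed module morphism forces $\widetilde g\widetilde\varphi=\varphi$ (so the lifting triangle commutes) and $\widetilde x\cdot a=\widetilde g(\widetilde x)\cdot a$ for all $\widetilde x\in\widetilde X,\ a\in A$. The last thing to verify --- and the step I expect to need the most care --- is that $(\widetilde\varphi,\widetilde X,\widetilde g)$ meets the \emph{definition} of a lifting of $(A,X,\varphi)$, namely that the $\widetilde X$-action on $A$ built into the crossed module $(A,\widetilde X,\widetilde\varphi)$ coincides with the pullback along $\widetilde g$ of the $X$-action on $A$. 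I would settle this by chasing the actions back to $B$: the $X$-action on $A$ is the restriction along $\omega$ of the $B$-action on $A$, so its pullback along $\widetilde g$ is the action via $\omega\widetilde g=\widetilde\omega$, which is precisely the $\widetilde X$-action on $A$ coming from the given lifting $(\widetilde\varphi,\widetilde X,\widetilde\omega)$. With that identification $(\widetilde\varphi,\widetilde X,\widetilde g)$ is a lifting of $(A,X,\varphi)$, i.e.\ $\widetilde\varphi$ is a lifting of $\varphi$, and the equivalence is established. Everything else in the argument is routine; no nontrivial computation is involved beyond the action bookkeeping just described.
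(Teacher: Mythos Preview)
Your proposal is correct and follows exactly the approach the paper intends: the corollary is stated without proof because it is the preceding theorem specialised to $(\widetilde A,\widetilde B,\widetilde\alpha)=(A,\widetilde X,\widetilde\varphi)$, $f=1_A$, $g=\widetilde\omega$, which is precisely what you do. Your extra care in checking that the $\widetilde X$-action on $A$ pulled back along $\widetilde g$ agrees with the given one (both being induced by $\widetilde\omega$) is a point the paper leaves implicit but which is needed to conclude that $(\widetilde\varphi,\widetilde X,\widetilde g)$ is a lifting in the sense of the definition.
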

	
	\begin{corollary}
		Let $(A,B,\alpha)$ be a crossed module. Assume that $(\varphi,X,\omega)$ and $(\widetilde{\varphi},\widetilde{X},\widetilde{\omega})$ are two liftings of $(A,B,\alpha)$ such that  $(A,X,\varphi)$ and $(A,\widetilde{X},\widetilde{\varphi})$ are both transitive. Then  $(\varphi,X,\omega)\cong(\widetilde{\varphi},\widetilde{X},\widetilde{\omega})$ if and only if $\Ker \varphi=\Ker \widetilde{\varphi}$.
	\end{corollary}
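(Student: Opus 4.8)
The plan is to reduce the statement to Corollary \ref{charsub} together with the uniqueness clause of the theorem immediately preceding it. The forward implication is the easy half. If $(\varphi,X,\omega)\cong(\widetilde\varphi,\widetilde X,\widetilde\omega)$ in $\LXM/(A,B,\alpha)$, then by definition the isomorphism is a group isomorphism $f\colon X\to\widetilde X$ with $f\varphi=\widetilde\varphi$ and $\widetilde\omega f=\omega$; since $f$ is injective, $a\in\Ker\widetilde\varphi$ iff $f(\varphi(a))=0$ iff $\varphi(a)=0$, so $\Ker\varphi=\Ker\widetilde\varphi$. No transitivity hypothesis is needed here.

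For the converse, assume $\Ker\varphi=\Ker\widetilde\varphi$. Because $(A,\widetilde X,\widetilde\varphi)$ is transitive and $\Ker\widetilde\varphi\subseteq\Ker\varphi$, Corollary \ref{charsub} tells us that $\widetilde\varphi$ is a lifting of $\varphi$; unwinding what this means (the comparison morphism $\widetilde g$ supplied by the preceding theorem satisfies $\omega\widetilde g=\widetilde\omega$), it amounts to a group homomorphism $f\colon\widetilde X\to X$ with $f\widetilde\varphi=\varphi$ and $\omega f=\widetilde\omega$, i.e.\ a morphism $f\colon(\widetilde\varphi,\widetilde X,\widetilde\omega)\to(\varphi,X,\omega)$ in $\LXM/(A,B,\alpha)$. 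Interchanging the roles of the two liftings and using the transitivity of $(A,X,\varphi)$ together with $\Ker\varphi\subseteq\Ker\widetilde\varphi$, Corollary \ref{charsub} likewise yields a morphism $g\colon(\varphi,X,\omega)\to(\widetilde\varphi,\widetilde X,\widetilde\omega)$.

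It remains to check that $f$ and $g$ are mutually inverse. The composite $fg\colon X\to X$ underlies a crossed module morphism $(1_A,fg)\colon(A,X,\varphi)\to(A,X,\varphi)$ with $\omega(fg)=\omega$; but $(1_A,\omega)\colon(A,X,\varphi)\to(A,B,\alpha)$ is a crossed module morphism (Remark \ref{remlift}) and $(A,X,\varphi)$ is transitive, so the uniqueness part of the theorem before Corollary \ref{charsub} says there is a \emph{unique} crossed module morphism $(1_A,h)\colon(A,X,\varphi)\to(A,X,\varphi)$ with $\omega h=\omega$; since $h=1_X$ qualifies, $fg=1_X$. The symmetric argument, using transitivity of $(A,\widetilde X,\widetilde\varphi)$, gives $gf=1_{\widetilde X}$. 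Hence $f$ is an isomorphism in $\LXM/(A,B,\alpha)$ and $(\varphi,X,\omega)\cong(\widetilde\varphi,\widetilde X,\widetilde\omega)$. The one point demanding care — and the main, though mild, obstacle — is the bookkeeping just used: one must be sure the maps furnished by Corollary \ref{charsub} are morphisms in the category of liftings of $(A,B,\alpha)$, i.e.\ compatible with the structure maps $\omega,\widetilde\omega$ into $B$ and not merely with $\varphi,\widetilde\varphi$; this is exactly the content carried over from the ``$\omega\widetilde g=g$'' condition in that theorem, and once it is in hand everything else is formal.
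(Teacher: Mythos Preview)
Your proof is correct and is precisely the argument the paper implicitly intends: the corollary is stated without proof, as an immediate consequence of Corollary~\ref{charsub} and the uniqueness clause of the theorem preceding it, and you have spelled out exactly that derivation. The only thing worth adding is that, since both $\varphi$ and $\widetilde\varphi$ are surjective with the same kernel, one could alternatively write down the isomorphism directly via the first isomorphism theorem as $\varphi(a)\mapsto\widetilde\varphi(a)$ and check compatibility with $\omega,\widetilde\omega$ by hand; but your route through the uniqueness statement is cleaner and better aligned with the paper's structure.
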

	
	\begin{theorem}
		Let $(A,B,\alpha)$ be a crossed module, $X$  a group and let $\omega\colon X\rightarrow B$ be an injective group morphism.  Then any group morphism $\varphi\colon A\rightarrow X$ such that $\omega\varphi=\alpha$ becomes a lifting of $\alpha$ over $\omega$.
	\end{theorem}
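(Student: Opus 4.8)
The plan is to equip $A$ with the $X$-action obtained by pulling the given $B$-action back along $\omega$, i.e. to set $x\cdot a:=\omega(x)\cdot a$ for $x\in X$ and $a\in A$ (this is precisely the action ``defined via $\omega$'' appearing in the definition of a lifting), and then to verify the two crossed module axioms for $\varphi$ with respect to this action. The commutativity of the triangle required in the definition of a lifting is nothing but the standing hypothesis $\omega\varphi=\alpha$, so once [CM1] and [CM2] are checked we are done.

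First I would record that $x\cdot a=\omega(x)\cdot a$ is genuinely an action of the group $X$ on $A$, being the composite of the group morphism $\omega$ with the action of $B$ on $A$. Axiom [CM2] is then immediate and, notably, does not use injectivity of $\omega$: for $a,a_1\in A$ we have
\[
\varphi(a)\cdot a_1=\omega(\varphi(a))\cdot a_1=\alpha(a)\cdot a_1=a+a_1-a,
\]
where the middle equality is $\omega\varphi=\alpha$ and the last is [CM2] for $(A,B,\alpha)$; this is the same reasoning as in the second part of the proof of Theorem \ref{Acrosmodifredmorpis}.

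The only place where injectivity of $\omega$ enters is [CM1], and the point is not to prove $\varphi(x\cdot a)=x+\varphi(a)-x$ directly in $X$ but to prove it after applying $\omega$, and then cancel $\omega$. On one side, using $\omega\varphi=\alpha$, the definition of the $X$-action, and [CM1] for $(A,B,\alpha)$,
\[
\omega\big(\varphi(x\cdot a)\big)=\alpha(x\cdot a)=\alpha\big(\omega(x)\cdot a\big)=\omega(x)+\alpha(a)-\omega(x);
\]
on the other side, since $\omega$ is a homomorphism and $\omega\varphi=\alpha$,
\[
\omega\big(x+\varphi(a)-x\big)=\omega(x)+\alpha(a)-\omega(x).
\]
Hence $\omega(\varphi(x\cdot a))=\omega(x+\varphi(a)-x)$, and injectivity of $\omega$ yields $\varphi(x\cdot a)=x+\varphi(a)-x$, which is [CM1]. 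Therefore $(A,X,\varphi)$ is a crossed module for the action via $\omega$, and together with $\omega\varphi=\alpha$ this exhibits $\varphi$ as a lifting of $\alpha$ over $\omega$.

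I expect the injectivity step to be the crux: everything else is bookkeeping, but without assuming $\omega$ injective one cannot promote the equality of images in $B$ to an equality in $X$, and the conclusion genuinely fails. An equivalent packaging would run the whole argument through Theorem \ref{Acrosmodifredmorpis}: the same cancellation shows that $\overline{\varphi}(a,x)=\varphi(a)+x$ is a group morphism $A\rtimes X\to X$, which by that theorem is exactly the assertion that $\varphi$ is a crossed module. I would likely mention this route as a remark but carry out the direct verification above.
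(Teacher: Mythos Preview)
Your proposal is correct and is essentially the paper's own argument: the paper invokes Theorem \ref{Acrosmodifredmorpis} to reduce the task to verifying $\varphi(x\cdot a)=x+\varphi(a)-x$, and then proves this identity by exactly your device of applying $\omega$ to both sides and cancelling via injectivity. What you present as the ``equivalent packaging'' through Theorem \ref{Acrosmodifredmorpis} is in fact the paper's chosen wrapper, while your direct check of [CM1] and [CM2] is the unpacked version; the computational content is identical.
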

	\begin{proof} According to Theorem \ref{Acrosmodifredmorpis} we only need to show that $\overline{\varphi}\colon A\ltimes X\rightarrow X$ defined by  $\overline{\varphi}(a,x)=\varphi(a)+x$  is a group morphism, i.e.  $\varphi(x\cdot  a)=x+\varphi(a)-x$ for all $x\in X$, $a\in A$ .
		\begin{align*}
			\omega(\varphi(x\cdot a)) & = \omega(\varphi(\omega(x)\cdot a)) \\
			& = \alpha(\omega(x)\cdot a) \tag{since $\omega\varphi=\alpha$}\\
			& = \omega(x)+\alpha(a)-\omega(x) \tag{by CM1}\\
		\end{align*}
		and
		\begin{align*}
			\omega(x+\varphi(a)-x) & = \omega(x)+\omega(\varphi(a))-\omega(x) \\
			& = \omega(x)+\alpha(a)-\omega(x) \tag{since $\omega\varphi=\alpha$.}\\
		\end{align*}
		Therefore since $\omega$ is injective and $\omega(\varphi(x\cdot a))=\omega(x+\varphi(a)-x)$ then $\varphi(x\cdot a)=x+\varphi(a)-x$ for all $x\in X$, $a\in A$. This completes the proof.
	\end{proof}
	
	\begin{corollary}\label{natlift}
		Every crossed module $(A,B,\alpha)$ lifts to the crossed module $(A,A/N,p)$ over $\omega\colon A/N\rightarrow B$, $a+N\mapsto \alpha(a)$ where $N=\Ker \alpha$.
		\[\xymatrix{ & A/N \ar[d]^\omega\\
			A \ar[r]_-\alpha \ar@{-->}[ur]^p & B}\]
	\end{corollary}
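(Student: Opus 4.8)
The plan is to reduce the whole statement to the preceding theorem (the one asserting that, when $\omega$ is injective, every group morphism $\varphi\colon A\to X$ with $\omega\varphi=\alpha$ is automatically a lifting). Thus the only things that actually need checking are that the quotient $A/N$ is legitimate, that $\omega$ is a well-defined injective group morphism, and that $\omega p=\alpha$.

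First I would invoke the proposition listing the standard properties of crossed modules: $N=\Ker\alpha$ is central in $A$, hence in particular a normal subgroup, so the quotient group $A/N$ exists and the canonical projection $p\colon A\to A/N$, $a\mapsto a+N$, is a (surjective) group morphism. Next I would check that $\omega\colon A/N\to B$, $a+N\mapsto\alpha(a)$, is well defined: if $a+N=a'+N$ then $a-a'\in N=\Ker\alpha$, so $\alpha(a)=\alpha(a')$. It is visibly a group morphism, and it is injective, since $\omega(a+N)=0$ forces $a\in\Ker\alpha=N$, i.e. $a+N$ is the identity of $A/N$. Finally, by construction $\omega p(a)=\omega(a+N)=\alpha(a)$ for all $a\in A$, so $\omega p=\alpha$ and the required triangle commutes.

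Having done this, the preceding theorem applies directly with $X=A/N$ and $\varphi=p$: equip $A$ with the action of $A/N$ obtained by restricting the given $B$-action along $\omega$; then $(A,A/N,p)$ is a crossed module and $(p,A/N,\omega)$ is a lifting of $(A,B,\alpha)$.

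I do not anticipate any genuine obstacle: the result is essentially a corollary, and its only nontrivial ingredient — the centrality, hence normality, of $\Ker\alpha$ — has already been recorded. The well-definedness and injectivity of $\omega$ and the identity $\omega p=\alpha$ are immediate, and the lifting property itself is then inherited from the injective-$\omega$ theorem rather than re-proved.
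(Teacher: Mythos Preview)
Your proposal is correct and matches the paper's intended argument: the corollary is stated immediately after the theorem on injective $\omega$, and your reduction to that theorem via the well-definedness and injectivity of $\omega$ (using the already-recorded centrality of $\Ker\alpha$) is exactly the route the paper implicitly takes.
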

	
	This lifting is called \emph{natural lifting}. By Corollary \ref{natlift} and first isomorphism theorem  for groups we can say that for every crossed module $(A,B,\alpha)$, $(A,\Im \alpha,\alpha)$ is a transitive crossed module with the same action.
	
	Now we will give a criterion for the existence of the lifting crossed module.
	
	\begin{theorem}\label{existlift}
		Let $(A,B,\alpha)$ be a crossed module and $C$  a subgroup of $\Ker \alpha$. Then there exists a lifting $(\varphi,X,\omega)$ of $\alpha$ such that $\Ker \varphi=C$. Moreover in this case $\Ker\omega=\Ker \alpha / C$.
	\end{theorem}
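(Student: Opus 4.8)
The plan is to build the lifting as a quotient of $A$. Recall from the standard properties of crossed modules that $\Ker\alpha$ is central in $A$; hence the subgroup $C\subseteq\Ker\alpha$ is also central, in particular normal in $A$, so that $X:=A/C$ is a group. I would take $\varphi\colon A\to X$ to be the canonical projection $a\mapsto a+C$, so that $\Ker\varphi=C$ by construction. Since $C\subseteq\Ker\alpha$, the rule $\omega\colon X\to B$, $a+C\mapsto\alpha(a)$, is a well-defined group morphism, and by construction $\omega\varphi=\alpha$; thus the triangle required in the definition of a lifting commutes (and of course $\Ker\varphi=C\subseteq\Ker\alpha$, consistent with Remark \ref{remlift}).

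It then remains to verify that $(A,X,\varphi)$, with the action of $X$ on $A$ induced via $\omega$, is a crossed module. Unwinding that action, for $x=a'+C$ and $a\in A$ one gets $x\cdot a=\omega(x)\cdot a=\alpha(a')\cdot a=a'+a-a'$ by axiom CM2 for $(A,B,\alpha)$; that is, the induced action of $A/C$ on $A$ is conjugation inside $A$, which is manifestly well defined because $C$ is central. To obtain CM1 and CM2 for $\varphi$ I would appeal to Theorem \ref{Acrosmodifredmorpis}: it suffices to check that $\overline{\varphi}\colon A\rtimes X\to X$, $(a,x)\mapsto\varphi(a)+x$, is a group morphism. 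Expanding $\overline{\varphi}\bigl((a,x)+(a_1,x_1)\bigr)$ with the semidirect product law \eqref{semidrectproduct} and substituting the identity $\varphi(x\cdot a_1)=x+\varphi(a_1)-x$ (immediate from the formula $x\cdot a_1=a'+a_1-a'$ just computed) collapses it to $\overline{\varphi}(a,x)+\overline{\varphi}(a_1,x_1)$, exactly as in the converse part of the proof of Theorem \ref{Acrosmodifredmorpis}. Hence $(\varphi,X,\omega)$ is a lifting of $\alpha$ with $\Ker\varphi=C$.

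For the last assertion I would simply read off the kernel of $\omega$: $\Ker\omega=\{a+C:\alpha(a)=0\}=\{a+C:a\in\Ker\alpha\}=\Ker\alpha/C$, the final identification being legitimate precisely because $C\subseteq\Ker\alpha$. Together with $\Ker\varphi=C$ this finishes the proof. I do not anticipate a genuine obstacle here; the only delicate points are the well-definedness of $\omega$ and of the induced action, and both hinge on the inclusion $C\subseteq\Ker\alpha$ and the centrality of $\Ker\alpha$ in $A$. (Taking $C=\Ker\alpha$ recovers the natural lifting of Corollary \ref{natlift}, while taking $C$ trivial gives $X\cong A$ and $\omega\cong\alpha$.)
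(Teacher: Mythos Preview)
Your proof is correct and uses exactly the construction in the paper: $X=A/C$ with $\varphi$ the quotient map and $\omega$ induced by $\alpha$, followed by the same elementwise computation of $\Ker\omega$. The only difference is that the paper appeals to Corollary~\ref{natlift} for the crossed-module structure on $(A,A/C,\varphi)$ (a citation that, read literally, covers only $C=\Ker\alpha$), whereas you verify it more carefully via Theorem~\ref{Acrosmodifredmorpis}.
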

	
	\begin{proof}
		Since $\Ker \alpha$ is central, $C$ is a normal subgroup of $A$. By Corollary \ref{natlift} $(\varphi,A/C,\omega)$ is a lifting of $\alpha$ where
		$\varphi(a)=a+C$ and $\omega(a+C)=\alpha(a)$. Obviously $\Ker \varphi=C$. Since $\varphi$ is surjective $(A,A/C,\varphi)$ is transitive. Further
		\begin{align*}
			\Ker\omega  & = \{a+C ~|~ \omega(a+C)=0\}\\
			& =  \{a+C ~|~ \alpha(a)=0\}\\
			& =  \{a+C ~|~ a\in\Ker\alpha\}\\
			& =  \Ker\alpha / C  \\
		\end{align*}
		and this completes the proof.
	\end{proof}
	
	\begin{definition} Let $(A,B,\alpha)$ be a crossed module. A lifting $(\varphi,X,\omega)$ of $(A,B,\alpha)$ is called \emph{transitive} if both crossed modules $(A,B,\alpha)$ and $(A,X,\varphi)$ are transitive.
	\end{definition}
	
	\begin{corollary}
		Let $(\varphi,X,\omega)$ be a transitive lifting of $(A,B,\alpha)$. Then $(\varphi,X,\omega)$ is an n-lifting if and only if $\left|\Ker\alpha/\Ker\varphi\right|=n$.
	\end{corollary}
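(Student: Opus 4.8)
The plan is to exhibit a group isomorphism $\Ker\omega\cong\Ker\alpha/\Ker\varphi$; once this is in hand the corollary is immediate, since by definition $(\varphi,X,\omega)$ is an $n$-lifting precisely when $|\Ker\omega|=n$.

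First I would collect what the hypotheses give. Because $(\varphi,X,\omega)$ is a lifting of $(A,B,\alpha)$ we have $\omega\varphi=\alpha$, and by Remark \ref{remlift} we have $\Ker\varphi\subseteq\Ker\alpha$; in particular $\Ker\varphi$ is a normal subgroup of $\Ker\alpha$, so the quotient $\Ker\alpha/\Ker\varphi$ is meaningful. Because the lifting is \emph{transitive}, the crossed module $(A,X,\varphi)$ is transitive, i.e.\ $\varphi$ is surjective; this is the fact that will actually be used.

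Next I would show $\Ker\omega=\varphi(\Ker\alpha)$. The inclusion $\varphi(\Ker\alpha)\subseteq\Ker\omega$ is clear: if $a\in\Ker\alpha$ then $\omega(\varphi(a))=\alpha(a)=0$. Conversely, given $x\in\Ker\omega$, surjectivity of $\varphi$ lets us write $x=\varphi(a)$ for some $a\in A$; then $\alpha(a)=\omega(\varphi(a))=\omega(x)=0$, so $a\in\Ker\alpha$ and $x\in\varphi(\Ker\alpha)$. Hence the restriction $\varphi|_{\Ker\alpha}\colon\Ker\alpha\to\Ker\omega$ is a well-defined surjective group morphism with kernel $\Ker\alpha\cap\Ker\varphi=\Ker\varphi$. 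By the first isomorphism theorem, $\Ker\alpha/\Ker\varphi\cong\Ker\omega$, so $|\Ker\omega|=|\Ker\alpha/\Ker\varphi|$, and therefore $|\Ker\omega|=n$ if and only if $|\Ker\alpha/\Ker\varphi|=n$.

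I do not anticipate a genuine obstacle: the argument is a short diagram chase, and the only point requiring attention is the reverse inclusion $\Ker\omega\subseteq\varphi(\Ker\alpha)$, which is exactly where transitivity of $(A,X,\varphi)$ (surjectivity of $\varphi$) enters; without it one only obtains $\varphi(\Ker\alpha)\subseteq\Ker\omega$, which is not enough to force the claimed cardinality equality.
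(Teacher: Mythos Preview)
Your argument is correct: the restriction $\varphi|_{\Ker\alpha}$ is surjective onto $\Ker\omega$ with kernel $\Ker\varphi$, so the first isomorphism theorem gives $\Ker\omega\cong\Ker\alpha/\Ker\varphi$, and the equivalence with $|\Ker\omega|=n$ is then definitional. The paper states this corollary without proof, as an immediate consequence of Theorem~\ref{existlift}; your direct computation is exactly the general-case version of the kernel calculation carried out in that theorem's proof (there for the particular lifting $X=A/C$), so your approach is essentially what the paper has in mind.
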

	
	
	\begin{corollary}
		Let $X$ be a topological group and $C$ a subgroup of the fundamental group $\pi_1(X,0)$ of $X$ at the identity. Then there exists a lifting $(p,\St_{\pi X}0/C,\omega)$ of $(\St_{\pi X}0,X,d_1)$ such that $\Ker p=C$. Moreover if the underlying
		space of  $X$ is path-connected, then  $\St_{\pi X}0/\pi_1(X,0)\cong X$ as groups.
	\end{corollary}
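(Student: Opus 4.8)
The plan is to recognise this corollary as a direct specialisation of Theorem~\ref{existlift} to the crossed module attached to a topological group, so that the work reduces to translating the hypotheses into that language.

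First I would pin down the kernel of the structure map. In the crossed module $(\St_{\pi X}0, X, d_1)$ an element of $\St_{\pi X}0$ is a homotopy class of paths in $X$ starting at the identity $0$, and $d_1$ records the endpoint; hence $\Ker d_1$ consists precisely of the homotopy classes of loops at $0$, that is $\Ker d_1 = \pi_1(X,0)$. In particular any subgroup $C$ of $\pi_1(X,0)$ is automatically a subgroup of $\Ker d_1$, so the hypothesis of Theorem~\ref{existlift} is met.

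Next I would invoke Theorem~\ref{existlift} with $A = \St_{\pi X}0$, $B = X$, $\alpha = d_1$ and this $C$. It yields a lifting $(\varphi, X', \omega)$ of $d_1$ with $\Ker\varphi = C$; following the construction in its proof (which passes through the natural lifting of Corollary~\ref{natlift}) one has $X' = \St_{\pi X}0/C$, $\varphi = p$ the canonical projection $[\sigma]\mapsto [\sigma]+C$, and $\omega([\sigma]+C)=d_1[\sigma]$. This is exactly the required lifting $(p,\St_{\pi X}0/C,\omega)$ with $\Ker p = C$, and as a by-product $\Ker\omega = \Ker d_1/C = \pi_1(X,0)/C$.

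For the final assertion I would take $C = \pi_1(X,0)$ and use path-connectedness of the underlying space: every $x\in X$ is joined to $0$ by a path, so $d_1$ is surjective and $\Im d_1 = X$. By the remark following Corollary~\ref{natlift} --- equivalently, by the first isomorphism theorem applied to the group morphism $d_1$ --- we get $\St_{\pi X}0/\Ker d_1 \cong \Im d_1 = X$, and since $\Ker d_1 = \pi_1(X,0)$ this is the claimed isomorphism $\St_{\pi X}0/\pi_1(X,0)\cong X$ of groups. The argument is routine once this dictionary is fixed; the only steps needing a little care are the identification $\Ker d_1 = \pi_1(X,0)$ and the deduction of surjectivity of $d_1$ from path-connectedness. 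There is no genuine obstacle, since all the structural content --- centrality of $\Ker\alpha$, normality of $C$ in $A$, and verification of the crossed module axioms for the quotient --- is already encapsulated in Theorem~\ref{existlift} and Corollary~\ref{natlift}.
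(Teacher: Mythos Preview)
Your proposal is correct and follows exactly the intended route: the paper states this result as a corollary without proof, placing it immediately after Theorem~\ref{existlift} and the natural-lifting discussion, and your argument simply unpacks that specialisation together with the first isomorphism theorem for the ``moreover'' clause. The identifications $\Ker d_1=\pi_1(X,0)$ and surjectivity of $d_1$ under path-connectedness are precisely the translation steps the paper leaves implicit.
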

	
	\begin{corollary}
		If  $X$ is a topological group whose underlying space is totally disconnected, then $\St_{\pi X}0 \cong \pi_1(X,0)$ as groups.
	\end{corollary}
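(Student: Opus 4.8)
The plan is to derive this from the structure of the crossed module $(\St_{\pi X}0, X, d_1)$ recorded earlier in the paper, rather than from the general lifting machinery. First I would recall that $\pi_1(X,0)$ is by definition the object group $(\pi X)(0)$ of the fundamental group-groupoid $\pi X$ at the identity $0\in X$; hence, as a subgroup of the group $\St_{\pi X}0$, it is exactly $\{g\in\St_{\pi X}0 \mid d_0(g)=0,\ d_1(g)=0\}=\Ker(d_1\colon \St_{\pi X}0\rightarrow X)$. So the assertion is equivalent to saying that $d_1$ is trivial on $\St_{\pi X}0$.

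This is precisely what total intransitivity gives. Since the underlying space of $X$ is totally disconnected, the Example following the definition of transitive crossed modules tells us that $(\St_{\pi X}0, X, d_1)$ is a totally intransitive crossed module; and by the definition of total intransitivity this means that $d_1\colon\St_{\pi X}0\rightarrow X$ is the zero morphism (and that $\St_{\pi X}0$ is abelian, which we do not need here). Therefore $\Ker d_1=\St_{\pi X}0$, and combining this with the identification above we obtain $\St_{\pi X}0=\pi_1(X,0)$; in particular they are isomorphic as groups, the isomorphism being the inclusion of the object group, which has here become the identity.

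I do not anticipate a genuine obstacle: the only non-formal ingredient is the assertion that $(\St_{\pi X}0, X, d_1)$ is totally intransitive, and this is already established in the cited Example — concretely, in a totally disconnected space each path component is a single point, so every path issuing from $0$ is constant at $0$ and is thus already a loop at $0$, which is exactly what forces $d_1=0$. An equally short alternative is to view this Corollary as the counterpart of the preceding one: there, path-connectedness makes $d_1$ surjective and the first isomorphism theorem for groups yields $X\cong\St_{\pi X}0/\pi_1(X,0)$; dually, total disconnectedness makes $d_1$ the zero map, so $\Im d_1$ is trivial and $\St_{\pi X}0/\pi_1(X,0)$ collapses, which again gives $\St_{\pi X}0\cong\pi_1(X,0)$.
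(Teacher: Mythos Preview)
Your argument is correct. The paper states this corollary without proof, but the route you take---invoking the Example that total disconnectedness makes $(\St_{\pi X}0,X,d_1)$ totally intransitive, hence $d_1=0$, hence $\Ker d_1=\St_{\pi X}0=\pi_1(X,0)$---is exactly the intended reading given the surrounding framework, and the dual-of-the-preceding-corollary alternative you mention at the end matches the paper's placement of the result.
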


	\begin{proposition}\label{1translift}
		If $(\varphi,X,\omega)$ is a 1-transitive lifting of a crossed module $(A,B,\alpha)$, then $(\varphi,X,\omega)$ is a lifting of any lifting of $(A,B,\alpha)$.
	\end{proposition}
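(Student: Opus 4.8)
The plan is to deduce this from Corollary~\ref{charsub}. First I would unpack the hypothesis: exactly as for the notion of ``transitive lifting'', calling $(\varphi,X,\omega)$ a \emph{1-transitive} lifting of $(A,B,\alpha)$ means that both crossed modules $(A,B,\alpha)$ and $(A,X,\varphi)$ are 1-transitive; in particular $\varphi\colon A\rightarrow X$ is a bijection, so $(A,X,\varphi)$ is in particular transitive and $\Ker\varphi$ is trivial. Then I would fix an arbitrary lifting $(\psi,Y,\rho)$ of $(A,B,\alpha)$, so that $\psi\colon A\rightarrow Y$ is a crossed module with action via $\rho\colon Y\rightarrow B$ and $\rho\psi=\alpha$, and the goal becomes: $\varphi$ is a lifting of $\psi$.

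Since both $(\varphi,X,\omega)$ and $(\psi,Y,\rho)$ are liftings of the same crossed module $(A,B,\alpha)$ and $(A,X,\varphi)$ is transitive, Corollary~\ref{charsub} applies with $(\varphi,X,\omega)$ playing the role of the transitive lifting and $(\psi,Y,\rho)$ the role of the other one: it gives that $\varphi$ is a lifting of $\psi$ precisely when $\Ker\varphi\subseteq\Ker\psi$. As $\Ker\varphi$ is trivial, this inclusion is automatic, and hence $(\varphi,X,\omega)$ is a lifting of $(\psi,Y,\rho)$; since $(\psi,Y,\rho)$ was arbitrary, the proposition follows. If one prefers to avoid quoting Corollary~\ref{charsub} and instead exhibit the connecting morphism directly, it is $\mu=\psi\varphi^{-1}\colon X\rightarrow Y$: it is a group morphism with $\mu\varphi=\psi$, and, using $\omega\varphi=\alpha$, with $\rho\mu=\rho\psi\varphi^{-1}=\alpha\varphi^{-1}=\omega$; the last identity is exactly what guarantees that the action of $X$ on $A$ obtained through $\mu$ coincides with the original action through $\omega$, so that $(A,X,\varphi)$ is still a crossed module over $Y$ and $(\varphi,X,\mu)$ is a lifting of $(A,Y,\psi)$.

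There is essentially no computation to do here, so I expect the only delicate points to be bookkeeping ones: reading the two definitions correctly (what a ``1-transitive lifting'' is, and what it means for $(\varphi,X,\omega)$ to be a lifting of another lifting), and checking the action-compatibility $\rho\mu=\omega$. What makes the argument go through is that the hypothesis forces $\Ker\varphi=\{0\}$ — not merely $\Ker\varphi=\Ker\alpha$ — which is precisely the kernel condition appearing in Corollary~\ref{charsub}; once this is noticed, everything follows immediately.
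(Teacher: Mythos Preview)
Your proposal is correct and follows essentially the same route as the paper's proof: both observe that 1-transitivity of $(A,X,\varphi)$ forces $\Ker\varphi$ to be trivial, so the inclusion $\Ker\varphi\subseteq\Ker\psi$ required by Corollary~\ref{charsub} holds automatically for any other lifting $(\psi,Y,\rho)$. Your additional remark exhibiting the connecting morphism $\mu=\psi\varphi^{-1}$ explicitly is a harmless elaboration not present in the paper.
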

	
	\begin{proof}
		Since $(A,X,\varphi)$ is 1-transitive, $\Ker\varphi$ is trivial and hence for any lifting $(\widetilde{\varphi},\widetilde{X},\widetilde{\omega})$ the kernel $\Ker\varphi$ is contained in $\Ker\widetilde{\varphi}$. Further by Corollary \ref{charsub} $(\varphi,X,\omega)$ is a lifting of $(\widetilde{\varphi},\widetilde{X},\widetilde{\omega})$.
	\end{proof}
	
	Hence one can  state the following definition.
	\begin{definition}
		A lifting of a crossed module $(A,B,\alpha)$ is called \emph{universal} if it  lifts to every lifting of $(A,B,\alpha)$.
	\end{definition}
	By Proposition \ref{1translift} a 1-transitive lifting is universal.
	
	As a consequence of Theorem \ref{existlift} we can give the following corollary.
	
	\begin{corollary}\label{unilift}
		Every crossed module has a universal lifting.
	\end{corollary}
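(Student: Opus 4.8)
The plan is to build, for a given crossed module $(A,B,\alpha)$, one explicit lifting that happens to be $1$-transitive; Proposition \ref{1translift} then does the rest, since it asserts that a $1$-transitive lifting is automatically a lifting of every lifting, i.e.\ universal.

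Concretely, I would apply Theorem \ref{existlift} to $(A,B,\alpha)$ taking $C$ to be the trivial subgroup of $\Ker\alpha$ (which is of course a subgroup of $\Ker\alpha$). This produces a lifting $(\varphi,X,\omega)$ of $\alpha$ with $\Ker\varphi=C$ trivial, so $\varphi$ is injective. Reading off the construction in the proof of Theorem \ref{existlift}, $X=A/C$ and $\varphi\colon A\to A/C$ is the canonical projection, hence $\varphi$ is also surjective. Thus $\varphi$ is a bijective group morphism, so the crossed module $(A,X,\varphi)$ is $1$-transitive; in other words $(\varphi,X,\omega)$ is a lifting of $(A,B,\alpha)$ for which $(A,X,\varphi)$ is $1$-transitive. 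Proposition \ref{1translift} now says precisely that such a lifting is a lifting of every lifting of $(A,B,\alpha)$, i.e.\ it is universal, which is the assertion of the corollary.

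There is essentially no obstacle here once Theorem \ref{existlift} and Proposition \ref{1translift} are available; the only step that deserves a moment's attention is the remark that the lifting delivered by Theorem \ref{existlift} with $C$ trivial has $\varphi$ surjective in addition to injective, which is exactly what makes it $1$-transitive rather than merely injective. Should one wish to avoid leaning on the internal description of that lifting, an equivalent route is to bypass Proposition \ref{1translift} and argue directly from Corollary \ref{charsub}: given any lifting $(\widetilde\varphi,\widetilde X,\widetilde\omega)$ of $(A,B,\alpha)$, the crossed module $(A,X,\varphi)$ is transitive and $\Ker\varphi$ is trivial, hence contained in $\Ker\widetilde\varphi$, so Corollary \ref{charsub} yields that $(\varphi,X,\omega)$ is a lifting of $(\widetilde\varphi,\widetilde X,\widetilde\omega)$ — which is the universality of $(\varphi,X,\omega)$.
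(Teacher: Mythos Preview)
Your proof is correct and follows the same route as the paper: choose $C$ trivial in Theorem \ref{existlift} to obtain the lifting $(1_A,A,\alpha)$, and observe that it lifts every lifting. The paper merely asserts this last point, whereas you spell it out via Proposition \ref{1translift} (equivalently Corollary \ref{charsub}); your version is thus a slightly more explicit rendering of the same argument.
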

	
	\begin{proof}
		Let $(A,B,\alpha)$ be a crossed module. If we choose $C$ as trivial in Theorem \ref{existlift}, then we have that $(1_A,A,\alpha)$ is a lifting of $(A,B,\alpha)$. Moreover $(1_A,A,\alpha)$ is a lifting of every lifting of $(A,B,\alpha)$.
	\end{proof}

	\section{Equivalences of the categories}
	
	In this section  we prove that for a certain group-groupoid $G$, there is a categorical  equivalence  between  the group-groupoid actions of $G$ on groups and lifting crossed modules of the  crossed module  corresponding to $G$.    We also prove that the liftings of a crossed module are  categorically equivalent to the covering morphisms of the same crossed module.

	\begin{theorem}
		Let $G$ be a group-groupoid and $(A,B,\alpha)$ the crossed module corresponding to $G$. Then the category $\GGdA(G)$ of group-groupoid actions of $G$ on groups and the category $\LXM/(A,B,\alpha)$ of lifting crossed modules of $(A,B,\alpha)$ are equivalent.
	\end{theorem}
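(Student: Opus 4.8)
The plan is to deduce the equivalence by chaining it onto two equivalences already at our disposal: the equivalence $\GGdC/G\simeq\GGdA(G)$ of \cite{Br-Mu1} recalled in Section~3, and an equivalence $\GGdC/G\simeq\LXM/(A,B,\alpha)$ which I will build from the functors $\delta$ and $\eta$ of Theorem~\ref{Theocatequivalence}. Concretely I would describe the composite directly. A functor $\Phi\colon\GGdA(G)\to\LXM/(A,B,\alpha)$ is given on objects by sending an action of $G$ on a group $X$ via $\omega\colon X\to B$ to the triple $(\varphi,X,\omega)$ with $\varphi(a)=a\bullet 0_X$; this is a lifting of $(A,B,\alpha)$ by the first theorem of Section~4 together with the identity $\omega\varphi=\alpha$ established there. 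On morphisms $\Phi$ is the identity: if $f\colon(X,\omega)\to(X',\omega')$ is a morphism of actions then $\omega'f=\omega$ and $f(\varphi(a))=f(a\bullet 0_X)=a\bullet f(0_X)=a\bullet 0_{X'}=\varphi'(a)$, so $f$ is a morphism of liftings.

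For a quasi-inverse $\Psi\colon\LXM/(A,B,\alpha)\to\GGdA(G)$, I would start from a lifting $(\varphi,X,\omega)$ and form the group-groupoid $\eta(A,X,\varphi)$, whose object group is $X$, whose morphism group is $A\rtimes X$, and whose source and target are $d_0(a,x)=x$, $d_1(a,x)=\varphi(a)+x$. By Remark~\ref{remlift} the pair $(1_A,\omega)$ is a morphism of crossed modules $(A,X,\varphi)\to(A,B,\alpha)$, so applying $\eta$ yields a morphism of group-groupoids $p\colon\eta(A,X,\varphi)\to\eta(A,B,\alpha)\cong G$ acting as $\omega$ on objects and as $(a,x)\mapsto(a,\omega(x))$ on morphisms. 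This $p$ is a covering morphism, since for each $x\in X$ the restriction $\St_{\eta(A,X,\varphi)}(x)=\{(a,x)\mid a\in A\}\to\St_{G}(\omega(x))$ is the bijection $(a,x)\mapsto(a,\omega(x))$; hence $p$ is an object of $\GGdC/G$, and composing with the equivalence of \cite{Br-Mu1} produces an action of $G$ on $X$, explicitly $g\bullet x=\varphi(a)+x$ whenever $g$ corresponds under $G\cong\eta(A,B,\alpha)$ to $(a,b)\in A\rtimes B$ with $b=\omega(x)$. On morphisms: a morphism $f\colon X\to X'$ of liftings ($f\varphi=\varphi'$, $\omega'f=\omega$) induces the crossed-module morphism $(1_A,f)\colon(A,X,\varphi)\to(A,X',\varphi')$ — the axiom $1_A(x\cdot a)=f(x)\cdot 1_A(a)$ holds because both actions are defined via $\omega,\omega'$ and $\omega'f=\omega$ — so $\eta(1_A,f)$ is a morphism of coverings over $G$ and therefore $f$ is a morphism of $G$-actions.

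It then remains to produce natural isomorphisms $\Psi\Phi\cong 1_{\GGdA(G)}$ and $\Phi\Psi\cong 1_{\LXM/(A,B,\alpha)}$; these are inherited from the natural isomorphisms $\eta\delta\cong 1_{\GpGd}$ and $\delta\eta\cong 1_{\XMod}$ of Theorem~\ref{Theocatequivalence} (restricted to the slice categories) together with the equivalence of \cite{Br-Mu1}. Concretely: starting from an action $(X,\omega)$, the lifting $\Phi(X,\omega)=(\varphi,X,\omega)$ has $\varphi(a)=a\bullet 0_X$, and reconstructing its action via $\Psi$ gives $g\bullet x=\varphi(a)+x$, which by the interchange law~\eqref{interchangeaction} and the unit axiom $1_{\omega(x)}\bullet x=x$ is exactly the original action; conversely, applying $\delta$ to the covering morphism built from a lifting $(\varphi,X,\omega)$ returns that lifting up to the canonical isomorphism $\Ker d_0\cong A$ furnished by restricting the covering morphism to the star over the identity (here one also uses that for a covering morphism $q\colon\widetilde G\to G$ of group-groupoids the restriction $\Ker\widetilde d_0\to\Ker d_0=A$ is a group isomorphism, so $\delta(q)$ is automatically a lifting).

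I expect the main obstacle to be precisely this bookkeeping around the two functors: checking that $\eta$ carries a lifting to a group-groupoid \emph{covering} morphism (and is compatible with morphisms in $\LXM/(A,B,\alpha)$), and — on the action side — verifying the interchange law~\eqref{interchangeaction} for the reconstructed action $g\bullet x=\varphi(a)+x$. The latter is exactly the statement that $\overline{\varphi}(a,x)=\varphi(a)+x$ is a group morphism, i.e.\ Theorem~\ref{Acrosmodifredmorpis}; once that and the covering condition are in hand, the functoriality of $\Phi,\Psi$ and the naturality of the two isomorphisms are routine diagram chases.
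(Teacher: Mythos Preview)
Your proposal is correct, but it takes a more indirect route than the paper. The paper constructs the quasi-inverse $\psi\colon\LXM/(A,B,\alpha)\to\GGdA(G)$ \emph{directly}: from a lifting $(\varphi,X,\omega)$ it simply declares the action $g\bullet x=\varphi(g-1_{d_0(g)})+x$ and then checks by a short computation that $\theta\psi$ and $\psi\theta$ are literally the identity functors (not merely naturally isomorphic to them). You instead factor through the covering category $\GGdC/G$: you apply $\eta$ to the lifting to obtain a covering morphism of group-groupoids, and then invoke the equivalence $\GGdC/G\simeq\GGdA(G)$ of Proposition~3.1 to extract an action --- arriving, as you note, at essentially the same formula. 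Your approach is more conceptual and explains \emph{why} the formula for the action looks the way it does, and it makes the relationship to the later Theorem~\ref{equivcovlift} and Corollary~\ref{equiv} transparent; the price is the extra bookkeeping you flag (verifying the covering condition, tracking the natural isomorphism $G\cong\eta(A,B,\alpha)$, and assembling the composite natural isomorphisms). The paper's direct argument is shorter and yields strict inverses rather than mere quasi-inverses, but hides the geometric picture that your factorisation makes explicit.
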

	
	\begin{proof}
		Define a functor $\theta\colon\GGdA(G)\rightarrow\LXM/(A,B,\alpha)$ assigning each object    $(X,\omega)$  of  $\GGdA(G)$ to  a lifting  $(\varphi,X,\omega)$  of $(A,B,\alpha)$ where \[\varphi\colon A\rightarrow X, a\mapsto a\bullet 0_X\]
		
		Conversely define a functor $\psi\colon\LXM/(A,B,\alpha)\rightarrow\GGdA(G)$ assigning each lifting  $(\varphi,X,\omega)$ of $(A,B,\alpha)$  to a group-groupoid action $(X,\omega)$ of $G$ on the group $X$  via an action map defined by  \[{G}_{d_{0}}\times_\omega X\rightarrow X, (g,x)\mapsto g\bullet x=\varphi(g-1_{d_{0}(g)})+x.\]
		
		Now prove that $\theta\circ\psi$ and $\psi\circ\theta$ are respectively naturally isomorphic to the identity functors based on the categories $\LXM/(A,B,\alpha)$ and $\GGdA(G)$. If  $(\varphi,X,\omega)$ is a lifting of $(A,B,\alpha)$, then  $(\theta\circ\psi)(\varphi,X,\omega)=(\varphi',X,\omega)$ where $\varphi'$ is defined  by
		\[\varphi'(a) = a\bullet 0_X = \varphi(a-1_{d_{0}(a)})+0_X = \varphi(a) \]
		for all $a\in A$ and hence $\theta\circ\psi=1$.
		
		On the other hand if $(X,\omega)$ is an object of $\GGdA(G)$ with an action of $G$ on $X$ given by \[{G}_{d_{0}}\times_\omega X\rightarrow X, (g,x)\mapsto g\bullet x\] then the new action obtained from the functor $\psi$ is defined by
		\begin{align*}
			g\bullet' x  &= (g-1_{d_{0}(g)})\bullet (0_X + x) \\
			&=  (g-1_{d_{0}(g)})\bullet (0_X + (1_{\omega(x)})\bullet x)\\
			&=  (g-1_{d_{0}(g)}+ 1_{\omega(x)})\bullet (0_X+x) \tag{$1_{d_{0}(g)}=1_{\omega(x)}$}\\
			&=  g\bullet x.
		\end{align*}
		and therefore  $\psi\circ\theta=1$ which  completes the proof.
	\end{proof}

	Recall that as a result  of \cite[Proposition 4.2]{Br-Mu1}  a morphism $(f_1,f_2)\colon(\w{A},\w{B},\w{\alpha})\rightarrow (A,B,\alpha)$ of crossed modules such that $f_1\colon \w{A}\rightarrow A$ an isomorphism is called  {\em covering morphism}. Hence we can give the following theorem.
	\begin{theorem}\label{equivcovlift}
		For   a crossed module $(A,B,\alpha)$, the category $\LXM/(A,B,\alpha)$ of lifting crossed modules and the category $\CXM/(A,B,\alpha)$ of covering crossed modules of $(A,B,\alpha)$ are equivalent.
	\end{theorem}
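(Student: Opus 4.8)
The plan is to exhibit explicit functors $\Theta\colon\LXM/(A,B,\alpha)\to\CXM/(A,B,\alpha)$ and $\Psi\colon\CXM/(A,B,\alpha)\to\LXM/(A,B,\alpha)$ and to show the two composites are naturally isomorphic to the identities; in fact $\Psi\circ\Theta$ will be the identity on the nose, so only one natural isomorphism has to be produced. A morphism in $\CXM/(A,B,\alpha)$ from a covering morphism $(f_1,f_2)\colon(\w{A},\w{B},\w\alpha)\to(A,B,\alpha)$ to $(f_1',f_2')\colon(\w{A}',\w{B}',\w\alpha')\to(A,B,\alpha)$ I understand to be a morphism of crossed modules $(h_1,h_2)$ with $(f_1',f_2')\circ(h_1,h_2)=(f_1,f_2)$; note that $f_1'h_1=f_1$ with $f_1,f_1'$ isomorphisms forces $h_1$ to be an isomorphism as well.

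First I would define $\Theta$ by sending a lifting $(\varphi,X,\omega)$ to the pair $(1_A,\omega)\colon(A,X,\varphi)\to(A,B,\alpha)$. By Remark~\ref{remlift} this is a morphism of crossed modules, and since $1_A$ is an isomorphism it is a covering morphism, hence an object of $\CXM/(A,B,\alpha)$. A morphism $f\colon X\to X'$ of liftings of $(A,B,\alpha)$ (so $f\varphi=\varphi'$ and $\omega'f=\omega$) I would send to $(1_A,f)$; this is a morphism of crossed modules because the actions of $X$ and $X'$ on $A$ are pulled back from the $B$-action via $\omega$ and $\omega'$ and $\omega'f=\omega$, and it lies over the identity of $(A,B,\alpha)$ since $(1_A,\omega')\circ(1_A,f)=(1_A,\omega)$. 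Functoriality is immediate.

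Next I would define $\Psi$. Given a covering morphism $(f_1,f_2)\colon(\w{A},\w{B},\w\alpha)\to(A,B,\alpha)$, so that $f_1$ is a group isomorphism, put $X=\w{B}$, $\omega=f_2$, and $\varphi=\w\alpha\circ f_1^{-1}\colon A\to X$, with $X$ acting on $A$ via $\omega$. Then $\omega\varphi=f_2\w\alpha f_1^{-1}=\alpha f_1f_1^{-1}=\alpha$, so the lifting triangle commutes, and the only point that really needs checking is that $(A,X,\varphi)$ is a crossed module. I would verify this by transport of structure along the group isomorphism $f_1$: the crossed module $(\w{A},\w{B},\w\alpha)$ transports to the crossed module $(A,\w{B},\w\alpha f_1^{-1})$ with $\w{B}$ acting on $A$ by $\w{b}\cdot_{\mathrm{tr}}a=f_1(\w{b}\cdot f_1^{-1}(a))$, and the morphism axiom $f_1(\w{b}\cdot\w{a})=f_2(\w{b})\cdot f_1(\w{a})$ of $(f_1,f_2)$ says precisely that $\w{b}\cdot_{\mathrm{tr}}a=f_2(\w{b})\cdot a$, i.e. that the transported action is the one pulled back via $\omega=f_2$. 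Hence $(\varphi,X,\omega)$ is a lifting of $(A,B,\alpha)$. On a morphism $(h_1,h_2)$ of covers I would set $\Psi(h_1,h_2)=h_2$; indeed $\omega'h_2=f_2'h_2=f_2=\omega$, and using $h_2\w\alpha=\w\alpha'h_1$ and $h_1=(f_1')^{-1}f_1$ one computes $h_2\varphi=h_2\w\alpha f_1^{-1}=\w\alpha'(f_1')^{-1}f_1f_1^{-1}=\w\alpha'(f_1')^{-1}=\varphi'$, so $h_2$ is a morphism of liftings.

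Finally I would check the composites. Applying $\Psi$ to $\Theta(\varphi,X,\omega)=(1_A,\omega)\colon(A,X,\varphi)\to(A,B,\alpha)$ returns $(\varphi,X,\omega)$ itself (the structure map of the source being $\varphi$ and $f_1=1_A$), and likewise on morphisms, so $\Psi\circ\Theta=\mathrm{id}$. In the other order, $\Theta\Psi$ sends $(f_1,f_2)\colon(\w{A},\w{B},\w\alpha)\to(A,B,\alpha)$ to $(1_A,f_2)\colon(A,\w{B},\w\alpha f_1^{-1})\to(A,B,\alpha)$, and I would observe that $(f_1^{-1},1_{\w{B}})$ is an isomorphism in $\CXM/(A,B,\alpha)$ from this object to the original $(f_1,f_2)$: it is a morphism of crossed modules by the same transport identity, its $A$-component $f_1^{-1}$ is an isomorphism, and it lies over the identity of $(A,B,\alpha)$ because $f_1\circ f_1^{-1}=1_A$ and $f_2\circ 1_{\w{B}}=f_2$; naturality in $(f_1,f_2)$ is routine. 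This yields the equivalence. The only genuinely nontrivial step is the transport identity in the construction of $\Psi$; alternatively one could appeal to Theorem~\ref{Acrosmodifredmorpis}, checking that $\overline{\varphi}\colon A\rtimes\w{B}\to\w{B}$, $(a,x)\mapsto\varphi(a)+x$, is a group morphism, but the transport route is shorter.
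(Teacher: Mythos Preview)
Your proof is correct and follows essentially the same approach as the paper: the functors $\Theta$ and $\Psi$ you define are exactly those in the paper's proof, and your identification of the natural isomorphism $(f_1^{-1},1_{\w B})$ simply spells out what the paper dismisses as ``straightforward.'' The only difference is the level of detail---your transport-of-structure argument for why $(A,\w B,\varphi)$ is a crossed module fills in what the paper calls ``easy to see.''
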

	\begin{proof}
		Clearly if $(\varphi,X,\omega)$ is a lifting of $(A,B,\alpha)$, then $(1_A,\omega)\colon(A,X,\varphi)\rightarrow (A,B,\alpha)$ is a crossed module morphism. Hence $(A,X,\varphi)$ is a covering crossed module of $(A,B,\alpha)$. Moreover if $(\widetilde{\varphi},\widetilde{X},\widetilde{\omega})$ is another lifting of $(A,B,\alpha)$ and $f$ is a morphism in $\LXM/(A,B,\alpha)$ from $(\varphi,X,\omega)$ to $ (\widetilde{\varphi},\widetilde{X},\widetilde{\omega})$,  then
		$(1_A,f)$ is a morphism in $\CXM/(A,B,\alpha)$ from $(A,X,\varphi)$ to $ (A,\widetilde{X},\widetilde{\varphi})$ . This construction is clearly functorial.
		
		Conversely if $(f_2,f_1)\colon(\w{A},\w{B},\w{\alpha})\rightarrow (A,B,\alpha)$ is a covering morphism of crossed modules, then $f_2\colon \w{A}\rightarrow A$ is an isomorphism and $\varphi=\w{\alpha}f_2^{-1}$ is the lifting of $\alpha$ over $f_1$. It is easy to see that $f_1\varphi=\alpha$ and $(A,\widetilde{B},\varphi)$ is a crossed module. Further  if \[(g_2,G)\colon(A',B',\alpha')\rightarrow (A,B,\alpha)\] is another covering of $(A,B,\alpha)$ and $(h_2,h_1)\colon(\w{A},\w{B},\w{\alpha})\rightarrow (A',B',\alpha')$ is a morphism in $\CXM/(A,B,\alpha)$ then $h_1\colon(\varphi,\widetilde{B},f_1)\rightarrow (\varphi',B',G)$ is a morphism in $\LXM/(A,B,\alpha)$. This construction is also functorial and the other details of the equivalence is straightforward.
	\end{proof}

	By Example \ref{autxmod} and Theorem \ref{equivcovlift} we can state the following corollary.
	
	\begin{corollary}
		Every crossed module $(A,B,\alpha)$ is a covering of the automorphism crossed module $(A,\Aut(A),\iota)$ constructed by $A$.
	\end{corollary}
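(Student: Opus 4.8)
The plan is to read this off directly from Example \ref{autxmod} and Theorem \ref{equivcovlift}. Write $\theta\colon B\to\Aut(A)$ for the group morphism $\theta(b)(a)=b\cdot a$ determined by the action of $B$ on $A$. Example \ref{autxmod} tells us that $\theta\alpha=\iota$ and that the $B$-action used in $(A,B,\alpha)$ is precisely the pullback along $\theta$ of the tautological $\Aut(A)$-action on $A$; in other words, the triple $(\alpha,B,\theta)$ is a lifting of the automorphism crossed module $(A,\Aut(A),\iota)$, so that $(A,B,\alpha)$ is an object of the category of lifting crossed modules of $(A,\Aut(A),\iota)$.

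Next I would apply the equivalence of Theorem \ref{equivcovlift} with base crossed module $(A,\Aut(A),\iota)$. The functor from liftings to coverings constructed there sends a lifting $(\varphi,X,\omega)$ of a crossed module $(A,B',\alpha')$ to the covering crossed module $(A,X,\varphi)$ equipped with the covering morphism $(1_A,\omega)\colon(A,X,\varphi)\to(A,B',\alpha')$, the first component $1_A$ being an isomorphism. Specialising to $(B',\alpha')=(\Aut(A),\iota)$ and to the lifting $(\alpha,B,\theta)$ identified in the previous paragraph, we obtain the covering morphism $(1_A,\theta)\colon(A,B,\alpha)\to(A,\Aut(A),\iota)$. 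Hence $(A,B,\alpha)$ is a covering crossed module of $(A,\Aut(A),\iota)$, which is the assertion.

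I do not expect any genuine obstacle: the corollary is essentially the composite of two results already in hand, and the only compatibility one has to keep track of — that the action of $B$ on $A$ appearing in $(A,B,\alpha)$ coincides with the one induced through $\theta$ from $\Aut(A)$ acting on $A$ — is built into the very definition of the automorphism crossed module and was already recorded in Example \ref{autxmod}. For completeness one could instead verify the corollary by hand, checking directly that $(1_A,\theta)$ is a crossed module morphism: $\theta\circ\alpha=\iota\circ 1_A$ holds by condition \ref{CM2} (the inner automorphism determined by $\alpha(a)$ is conjugation by $a$), and $1_A(b\cdot a)=\theta(b)\cdot a=\theta(b)(a)$ holds by definition of $\theta$; since $1_A$ is an isomorphism this exhibits $(A,B,\alpha)$ as a covering of $(A,\Aut(A),\iota)$ without invoking Theorem \ref{equivcovlift}. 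The point of phrasing it as a corollary, however, is precisely to illustrate Theorem \ref{equivcovlift} in combination with Example \ref{autxmod}.
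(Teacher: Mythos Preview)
Your proposal is correct and matches the paper's own approach exactly: the paper states this corollary with no separate proof, simply prefacing it with ``By Example \ref{autxmod} and Theorem \ref{equivcovlift} we can state the following corollary,'' which is precisely the route you take. Your additional direct verification that $(1_A,\theta)$ is a covering morphism is a nice bonus but not required.
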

	
	The category $\CXM/(A,B,\alpha)$ of crossed module coverings of a certain crossed module $(A,B,\alpha)$ is equivalent to the category $\GGdC/G$ of  group-groupoid coverings of  group-groupoid  corresponding to $(A,B,\alpha)$ (see also \cite{Ak-Al-Mu-Sa,Mu-Tu} for more general case of  crossed modules over groups with operations for an algebraic category $\C$). Hence by  Theorem \ref{equivcovlift} we can obtain the following corollary.
	
	\begin{corollary}\label{equiv}
		Let $G$ be a group-groupoid and $(A,B,\alpha)$  the corresponding crossed module. Then the category $\GGdC/G$ of group-groupoid coverings of $G$ and the category $\LXM/(A,B,\alpha)$ of lifting crossed modules of $(A,B,\alpha)$ are equivalent.
	\end{corollary}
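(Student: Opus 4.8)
The plan is to deduce the statement by chaining together equivalences that are already at our disposal. By Theorem~\ref{equivcovlift} the category $\LXM/(A,B,\alpha)$ of lifting crossed modules is equivalent to the category $\CXM/(A,B,\alpha)$ of covering crossed modules of $(A,B,\alpha)$; and, as recalled just above, the category $\CXM/(A,B,\alpha)$ is in turn equivalent to the category $\GGdC/G$ of group-groupoid coverings of $G$. Hence the proof is essentially the chain of equivalences $\GGdC/G\simeq\CXM/(A,B,\alpha)\simeq\LXM/(A,B,\alpha)$, together with the observation that composites of equivalences are equivalences.

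For completeness I would also indicate how the middle equivalence $\CXM/(A,B,\alpha)\simeq\GGdC/G$ is obtained, namely by restricting the functors $\delta\colon\GpGd\to\XMod$ and $\eta\colon\XMod\to\GpGd$ from the proof of Theorem~\ref{Theocatequivalence}. The one thing that needs checking is that $\delta$ and $\eta$ match the two ``covering'' conditions. Given a morphism $(f_1,f_2)$ of crossed modules whose $A$-component $f_1$ is an isomorphism, the group-groupoid morphism $\eta(f_1,f_2)$ is $f_2$ on objects and $(a,b)\mapsto(f_1(a),f_2(b))$ on morphisms; since the star of $\eta(A,B,\alpha)$ over an object $b$ is $\{(a,b)\mid a\in A\}$, identified with $A$ by $a\mapsto(a,b)$, the restriction of $\eta(f_1,f_2)$ to this star is identified with $f_1$, so $\eta(f_1,f_2)$ is a covering morphism of groupoids because $f_1$ is bijective. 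Conversely, for a group-groupoid covering $q\colon\widetilde H\to G$ the $A$-component of $\delta(q)$ is $q|_{\Ker d_0}$, which is bijective since $\Ker d_0$ is the star at the identity object; and, using that in a group-groupoid every star $\St_{\widetilde H}\widetilde x$ equals the translate $\Ker d_0+1_{\widetilde x}$, the covering condition on all stars is in fact equivalent to bijectivity of $q|_{\Ker d_0}$. Thus $\delta$ and $\eta$ send coverings to coverings, and since they already constitute an equivalence $\XMod\simeq\GpGd$ with natural isomorphisms $\delta\eta\cong 1$ and $\eta\delta\cong 1$, their restrictions (after the identification $\eta\delta(G)\cong G$) furnish the equivalence of the corresponding comma categories.

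Composing this with Theorem~\ref{equivcovlift} gives $\GGdC/G\simeq\LXM/(A,B,\alpha)$, as asserted. The main --- and really the only nonroutine --- obstacle is the dictionary between the ``bijectivity on stars'' definition of a covering of group-groupoids and the ``$A$-component is an isomorphism'' definition of a covering of crossed modules; once the stars of $\eta(A,B,\alpha)$ are identified with $A$ and the coset description of stars in a group-groupoid is invoked, this is a short direct check, and what remains --- functoriality of the two constructions and the fact that the natural isomorphisms descend to the comma categories over $G$ and over $(A,B,\alpha)$ --- is routine bookkeeping.
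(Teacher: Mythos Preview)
Your proposal is correct and follows exactly the paper's approach: the corollary is obtained by composing the equivalence $\GGdC/G\simeq\CXM/(A,B,\alpha)$ (which the paper cites from \cite{Br-Mu1,Ak-Al-Mu-Sa,Mu-Tu} rather than proves) with Theorem~\ref{equivcovlift}. The extra paragraph in which you verify directly that $\delta$ and $\eta$ carry coverings to coverings is additional detail the paper omits, but it is correct and does not change the strategy.
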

	
	By Corollary \ref{unilift} and Corollary \ref{equiv} we can obtain the following result.
	
	\begin{corollary}
		Every group-groupoid has a universal covering group-groupoid.
	\end{corollary}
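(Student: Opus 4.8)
The plan is to deduce the statement from Corollaries \ref{unilift} and \ref{equiv} through the crossed-module/group-groupoid dictionary of Theorem \ref{Theocatequivalence}. Let $G$ be a group-groupoid and let $(A,B,\alpha)$ be the crossed module corresponding to it. By Corollary \ref{unilift} the crossed module $(A,B,\alpha)$ has a universal lifting; explicitly, taking $C$ trivial in Theorem \ref{existlift} yields the lifting $(1_A,A,\alpha)$, which lifts to (i.e.\ is a lifting of) every lifting of $(A,B,\alpha)$.

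Next I would record that universality is a categorical property on both sides. In $\LXM/(A,B,\alpha)$ a morphism from $(\varphi,X,\omega)$ to $(\varphi',X',\omega')$ is a group homomorphism $f\colon X\to X'$ with $f\varphi=\varphi'$ and $\omega'f=\omega$; in view of this definition and of Proposition \ref{Proplifting} (or of Corollary \ref{charsub} in the transitive case), a lifting admits a morphism to a given lifting precisely when it is a lifting of it, so a universal lifting is exactly an initial object of $\LXM/(A,B,\alpha)$. For $(1_A,A,\alpha)$ the unique such morphism to an arbitrary lifting $(\varphi,X,\omega)$ is $\varphi\colon A\to X$ itself, since $\varphi\circ 1_A=\varphi$ and $\omega\varphi=\alpha$. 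Symmetrically, by the definitions of $\GGdC/G$ and of a universal covering morphism, a covering of $G$ that covers every covering of $G$ is exactly an initial object of $\GGdC/G$. Now Corollary \ref{equiv} provides an equivalence $\GGdC/G\simeq\LXM/(A,B,\alpha)$, and an equivalence of categories preserves initial objects; hence the covering morphism $p\colon\widetilde G\to G$ corresponding to the universal lifting $(1_A,A,\alpha)$ is an initial object of $\GGdC/G$, that is, $\widetilde G$ covers every cover of $G$ and so is a universal covering group-groupoid of $G$. (Concretely one expects $\widetilde G=\eta(A,A,1_A)$, with object group $A$ and morphism group $A\rtimes A$.)

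The only step carrying genuine content — the rest being a formal chase — is the identification, in the second paragraph, of the ``is a lifting of'' relation with the existence of morphisms in $\LXM/(A,B,\alpha)$ and, correspondingly, of ``covers'' with the existence of morphisms in $\GGdC/G$, so that universality really is transported along the equivalence of Corollary \ref{equiv}. I would also remark that $\eta(A,A,1_A)$ is always $1$-transitive, so that $p\colon\widetilde G\to G$ meets the transitivity clause in the Preliminaries' notion of a universal covering morphism exactly when $G$ (equivalently $\alpha$) is surjective; in general ``universal covering group-groupoid'' is to be understood in the sense of the initiality established above.
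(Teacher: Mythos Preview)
Your proof is correct and takes essentially the same approach as the paper, which merely cites Corollaries \ref{unilift} and \ref{equiv} without further argument; you supply the categorical details (universal liftings and universal covers as initial objects, transported by the equivalence) that make the inference rigorous. Your closing caveat about transitivity is a point the paper leaves implicit.
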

	
	By Corollary \ref{natlift} and Corollary \ref{equiv} we can obtain the following result.
	
	\begin{corollary}
		Let $G$ be a group-groupoid. Then $G$ acts on $X=\Ker d_{0}/G(0)$ over $\omega\colon\Ker d_{0}/G(0)\rightarrow \Ob(G), \omega(a+G(0))=d_1(a)$ as a group-groupoid action with
		\[\varphi\colon {G}_{d_{0}}\times_\omega X\rightarrow X, \varphi(b,a+G(0))=(b\circ a)+G(0)\]
	\end{corollary}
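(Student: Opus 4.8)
The plan is to realize the claimed action as the image of the \emph{natural lifting} of Corollary~\ref{natlift} under the equivalence between lifting crossed modules and group-groupoid actions. First I would unwind the crossed module $(A,B,\alpha)$ corresponding to $G$: by the construction in the proof of Theorem~\ref{Theocatequivalence} one has $A=\Ker d_0$, $B=\Ob(G)$, $\alpha=d_1|_{\Ker d_0}$, and the $B$-action on $A$ is $b\cdot a=1_b+a-1_b$. Consequently $\Ker\alpha=\{a\in\Ker d_0:d_1(a)=0\}$ is exactly $G(0,0)=G(0)$, the object group of $G$ at the identity object $0$; being central in $A$ it is normal, so $X=\Ker d_0/G(0)$ is a well-defined group. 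Applying Corollary~\ref{natlift} with $N=\Ker\alpha=G(0)$ then yields the lifting $(p,X,\omega)$ of $(A,B,\alpha)$, where $p(a)=a+G(0)$ and $\omega(a+G(0))=d_1(a)$ --- precisely the morphism $\omega$ appearing in the statement.

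Next I would transport this lifting to a group-groupoid action of $G$ on $X$ via the functor $\psi$ of the equivalence $\GGdA(G)\simeq\LXM/(A,B,\alpha)$ (a consequence of Corollary~\ref{equiv} together with the equivalence $\GGdC/G\simeq\GGdA(G)$), under which a lifting $(\varphi,X,\omega)$ corresponds to the action $g\bullet x=\varphi(g-1_{d_0(g)})+x$. For $\varphi=p$ and $x=a+G(0)$, an element $(g,a+G(0))$ of the pullback $G_{d_0}\times_\omega X$ satisfies $d_0(g)=\omega(a+G(0))=d_1(a)$, so the groupoid composite $g\circ a$ is defined, and $d_0(g\circ a)=d_0(a)=0$, i.e. $g\circ a\in\Ker d_0$. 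Then
\[
g\bullet(a+G(0))=p\bigl(g-1_{d_0(g)}\bigr)+(a+G(0))=\bigl(g-1_{d_0(g)}+a\bigr)+G(0)=(g\circ a)+G(0),
\]
where the last equality uses the group-groupoid identity $g\circ a=g-1_{d_0(g)}+a$ for composable $g,a$; this identity follows from the interchange rule $(b\circ a)+(d\circ c)=(b+d)\circ(a+c)$ by taking $b=g$, $a=1_{d_0(g)}$, $d=1_0$, $c=-1_{d_0(g)}+a$. Hence the action produced by $\psi$ is $\varphi(b,a+G(0))=(b\circ a)+G(0)$, as claimed.

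The verification is essentially bookkeeping, since the resulting map automatically satisfies all group-groupoid action axioms by virtue of $\psi$ landing in $\GGdA(G)$; the only points needing care are the identification $\Ker\alpha=G(0)$, the independence of $(b\circ a)+G(0)$ of the chosen representative $a$ (using $d_1(n)=0$ for $n\in G(0)$, so $g\circ(a+n)=(g\circ a)+n$ with $n\in G(0)$), and keeping the pullback condition defining the action distinct from the composability condition in the groupoid. I expect the mild subtlety to be the group-groupoid identity $g\circ a=g-1_{d_0(g)}+a$ and checking that its hypothesis matches the pullback condition $d_0(g)=\omega(a+G(0))=d_1(a)$.
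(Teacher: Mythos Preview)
Your proposal is correct and follows exactly the route the paper indicates: the paper's own ``proof'' is the single sentence ``By Corollary~\ref{natlift} and Corollary~\ref{equiv} we can obtain the following result,'' and you have faithfully unpacked this by taking the natural lifting $(p,A/G(0),\omega)$ and pushing it through the equivalence to obtain the explicit action formula. One small remark: the formula $g\bullet x=\varphi(g-1_{d_0(g)})+x$ that you invoke is given directly in the proof of the theorem establishing $\GGdA(G)\simeq\LXM/(A,B,\alpha)$, so you may cite that theorem rather than routing through Corollary~\ref{equiv} and Proposition~3.1; also, in your interchange-rule verification the letter $a$ is used both for the element of $\Ker d_0$ and for one of the four slots in the interchange identity, which you should relabel to avoid ambiguity.
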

	
	In \cite[Proposition 2.3]{Br-Mu1} it  was  proved that for a topological group $X$ whose underlying space has a universal cover,  the category $\TGrC/X$ of topological coverings of $X$ and the category $\GGdCov/X$ of coverings of group-groupoid $\pi X$  are equivalent; and in  \cite[Theorem 4.1]{Ak-Al-Mu-Sa}  a similar result was given  for more general topological  groups with operations. Hence we can state the  following corollary.
	
	\begin{corollary}
		Let $X$ be a topological group whose underlying space has a universal cover. Then the category $\TGrC/X$ of covers of $X$ in the category of topological groups and the category \[\LXM/(\St_{\pi X}0,X,d_1)\] of liftings of $(\St_{\pi X}0,X,d_1)$ are equivalent.
	\end{corollary}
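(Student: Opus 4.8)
The plan is to obtain the asserted equivalence by composing two equivalences that are already available, after first identifying the crossed module in the statement as the one coming from a group-groupoid. The key observation is that $(\St_{\pi X}0,X,d_1)$ is precisely the crossed module $\delta(\pi X)$ associated to the group-groupoid $\pi X$ by the functor $\delta\colon\GpGd\rightarrow\XMod$ of Theorem \ref{Theocatequivalence}: in $\pi X$ we have $\Ker d_0=\St_{\pi X}0$ (the morphisms with source the identity $0\in X$), $\Ob(\pi X)=X$, and the target map of $\pi X$ restricts on $\St_{\pi X}0$ to $d_1$. This was already recorded among the examples of crossed modules in Section 3. So we are entitled to take $G=\pi X$ and $(A,B,\alpha)=(\St_{\pi X}0,X,d_1)$ in the results that follow.

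First I would invoke \cite[Proposition 2.3]{Br-Mu1}: since, by hypothesis, the underlying space of $X$ has a universal cover, the category $\TGrC/X$ of covers of $X$ in the category of topological groups is equivalent to the category $\GGdCov/X$ of coverings of the group-groupoid $\pi X$. Then I would apply Corollary \ref{equiv} to the group-groupoid $G=\pi X$, whose corresponding crossed module is $(\St_{\pi X}0,X,d_1)$ by the identification above; this yields an equivalence between $\GGdCov/X$ and $\LXM/(\St_{\pi X}0,X,d_1)$. Composing these two equivalences gives $\TGrC/X\simeq\LXM/(\St_{\pi X}0,X,d_1)$, which is the statement.

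I do not expect any genuine obstacle: all of the substance is already contained in Theorem \ref{Theocatequivalence}, Corollary \ref{equiv}, and \cite[Proposition 2.3]{Br-Mu1}. The one point that deserves an explicit line is the identification $\delta(\pi X)=(\St_{\pi X}0,X,d_1)$, isolated above, which follows directly from the description of $\delta$ given in the proof of Theorem \ref{Theocatequivalence} together with the definition of $\pi X$. If one wants the statement in the strong form of an explicit equivalence of categories rather than a bare existence assertion, the remaining work is the routine bookkeeping of noting that the composite of the functors realising the two cited equivalences is again an equivalence, which holds at once since a composite of equivalences of categories is an equivalence. Accordingly the write-up should be quite short.
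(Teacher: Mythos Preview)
Your proposal is correct and follows essentially the same route as the paper: the corollary is stated immediately after the paragraph recalling \cite[Proposition 2.3]{Br-Mu1}, and the intended argument is precisely to compose that equivalence $\TGrC/X\simeq\GGdCov/X$ with Corollary~\ref{equiv} applied to $G=\pi X$. Your explicit identification $\delta(\pi X)=(\St_{\pi X}0,X,d_1)$ is a helpful clarification that the paper leaves implicit (it appears as example (iv) in Section~3), but otherwise there is no difference in approach.
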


	\section{Conclusion}
	
	Using the results of the paper \cite{Mu-Sa-Al} it  could be possible to develop normal and quotient notions of lifting crossed modules. Moreover using the equivalence of the categories in \cite[Section 3]{Por} as stated in the introduction, parallel  results of this paper could be obtained for crossed modules of group with operations and internal groupoids working in a certain algebraic category $\C$.

\end{document}